\newlength{\depthofsumsign}
\let\I\@undefined
\newbox\shell
\newcommand{\dia}[2]{\setbox\shell=\hbox{\begin{picture}(180,120)(-90,-60)#1
\put(-90,-60){\makebox(180,120)[b]{\large #2}}\end{picture}}\dimen0=\ht
\shell\multiply\dimen0by7\divide\dimen0by16\raise-\dimen0\box\shell\hfill}
\DeclareSymbolFont{operators}{OT1}{txr}{m}{n}
\def\operator@font{\mathgroup\symoperators}
\DeclareSymbolFont{italic}{OT1}{txr}{m}{it}
\DeclareSymbolFontAlphabet{\mathrm}{operators}
\DeclareMathAlphabet{\mathbf}{OT1}{txr}{bx}{n}
\DeclareMathAlphabet{\mathit}{OT1}{txr}{m}{it}
\SetMathAlphabet{\mathit}{bold}{OT1}{txr}{bx}{it}
\DeclareSymbolFont{letters}{OML}{txmi}{m}{it}
\DeclareSymbolFont{lettersA}{U}{txmia}{m}{it}
\DeclareSymbolFontAlphabet{\mathfrak}{lettersA}
\DeclareSymbolFont{symbols}{OMS}{txsy}{m}{n}
\DeclareMathOperator{\IKM}{\mathbf{IKM}}
\DeclareMathOperator{\Span}{span}
\DeclareMathOperator{\Kl}{Kl}
\DeclareMathOperator{\Sym}{Sym}
\DeclareMathOperator{\D}{d}
\DeclareMathOperator{\I}{Im}
\DeclareMathOperator{\IvKM}{\mathbf{\widetilde IKM}}
\DeclareMathOperator{\IKMh}{\mathbf{IK\widehat M}}
\DeclareMathOperator{\IKvM}{\mathbf{I\widetilde KM}}
\def\XXint#1#2#3{{\setbox0=\hbox{$#1{#2#3}{\int}$}
     \vcenter{\hbox{$#2#3$}}\kern-.5\wd0}}
\def\eor{\hfill$ \square$}
\theoremstyle{plain}
\newtheorem{theorem}{Theorem}[section]
\newtheorem{proposition}[theorem]{Proposition}
\newtheorem{lemma}[theorem]{Lemma}
\newtheorem{corollary}[theorem]{Corollary}
\newtheorem{conjecture}[theorem]{Conjecture}
\newenvironment{remark}[1][Remark]{\begin{trivlist}
\item[\hskip \labelsep {\bfseries #1}]}{\end{trivlist}}
\theoremstyle{definition}
\numberwithin{equation}{section}
\begin{document}

\selectlanguage{english}
\title{$\mathbb Q$-linear dependence of certain Bessel moments}
\author{Yajun Zhou}
\address{Program in Applied and Computational Mathematics (PACM), Princeton University, Princeton, NJ 08544; Academy of Advanced Interdisciplinary Studies (AAIS), Peking University, Beijing 100871, P. R. China }
\email{yajunz@math.princeton.edu, yajun.zhou.1982@pku.edu.cn}

\thanks{\textit{Keywords}:  Bessel moments, Feynman diagrams\\\indent\textit{MSC 2020}:  11F03, 33C10   (Primary)  44A15, 81T18  (Secondary)\\\indent * This research was supported in part  by the Applied Mathematics Program within the Department of Energy
(DOE) Office of Advanced Scientific Computing Research (ASCR) as part of the Collaboratory on
Mathematics for Mesoscopic Modeling of Materials (CM4)}
\date{\today}

\maketitle


\begin{abstract}Let $I_0$ and $K_0$ be modified Bessel functions of the zeroth order. We use Vanhove's differential operators for Feynman integrals to derive upper bounds for  dimensions of the $\mathbb Q$-vector space spanned by certain sequences of Bessel moments \[ \left\{\left.\int_0^\infty [I_0(t)]^a[K_0(t)]^b t^{2k+1}\mathrm{d}\, t\right|k\in\mathbb Z_{\geq0}\right\},\]where $a$ and $b$ are fixed non-negative integers. For $ a\in\mathbb Z\cap[1,b)$, our upper bound for the $ \mathbb Q$-linear dimension is $\lfloor (a+b-1)/2\rfloor$, which improves the Borwein--Salvy bound  $\lfloor (a+b+1)/2\rfloor$. Our new  upper bound  $\lfloor (a+b-1)/2\rfloor$ is not sharp for $ a=2,b=6$, due to an exceptional $ \mathbb Q$-linear relation $\int_0^\infty [I_0(t)]^2[K_0(t)]^6 t\mathrm{d}\, t=72\int_0^\infty [I_0(t)]^2[K_0(t)]^6 t^{3}\mathrm{d}\, t$, which is provable by integrating modular forms.  \end{abstract}



\pagenumbering{arabic}

\section{Introduction}We define modified Bessel functions of the zeroth order as $ I_0(t)=\frac{1}{\pi}\int_0^\pi e^{t\cos\theta}\D\theta$ and $ K_0(t)=\int_0^\infty e^{-t\cosh u}\D u$, where $t>0$. For certain non-negative integers $a,b$ and $k$, the Bessel moments \begin{align}\IKM(a,b;2k+1):=
\int_0^\infty [I_0(t)]^a[K_0(t)]^b t^{2k+1}\D t
\end{align}are interesting objects to both physicists and mathematicians. In the graphical language of physicists, they represent Feynman diagrams in 2-dimensional quantum field theory \cite{BBBG2008}, and also contribute to the  finite part of renormalized perturbative expansions in $ (4-\varepsilon)$-dimensional quantum electrodynamics \cite{Laporta2008,Laporta:2017okg}. From the analytic perspective of mathematicians, every well-defined  sequence of Bessel moments $ \{\IKM(a,b;2k+1)|k\in\mathbb Z_{\geq0}\}$ is completely determined by the first few terms and a  linear recursion \cite{BorweinSalvy2007}, and certain  Bessel moments are related to critical $L$-values attached to special modular forms \cite{Broadhurst2016,Zhou2017WEF}.

According to the theory of Borwein--Salvy \cite{BorweinSalvy2007}, we have the following recursions for  sequences of Bessel moments satisfying $a+b\in\{5,6\}$ and $ a\in\mathbb Z\cap[0,b)$:\begin{align}
\sum_{j=0}^3(-1)^jp_{a+b,j}(j+k+1)\IKM(a,b;2j+k)=0
\end{align}where \cite[(11)]{BBBG2008}\begin{align}
\begin{array}{ll}
p_{5,0}(x) = x^6 & p_{6,0}(x) = x^7 \\
p_{5,1}(x) = 35 x^4 + 42 x^2 + 3 & p_{6,1}(x) =
x(56x^4+112x^2+24) \\
p_{5,2}(x) = 259 x^2 + 104 & p_{6,2}(x) = x(784x^2+944) \\
p_{5,3}(x) = 225 & p_{6,3}(x) = 2304x. \\
\end{array}
\label{crec56}
\end{align}From the recursions above, one might deduce upper bounds $ \dim_{\mathbb Q}\Span_{\mathbb Q}\{\IKM(a,b;2k+1)|k\in\mathbb Z_{\geq0}\}\leq3$ for these sequences of Bessel moments involving 5 or 6 Bessel factors in the integrands. However, for  $a+b\in\{5,6\}$ and non-vacuum diagrams satisfying $ a\in\mathbb Z\cap[1,b)$, such upper bounds are not tight enough. It has been shown by Bailey--Borwein--Broadhurst--Glasser \cite[\S5.10]{BBBG2008} that $45 \IKM(2,3;5)=228 \IKM(2,3;3)-16\IKM(2,3;1)$, and several similar sum rules have been proposed and checked up to 1200 decimal places.
\begin{conjecture}[Bailey--Borwein--Broadhurst--Glasser {\cite[\S\S5.1, 5.5, 6.1, 6.2, 6.4]{BBBG2008}}]\label{conj:BBBG_dimQ}The following integral identities are true:{\allowdisplaybreaks\begin{align}
\int_0^\infty I_0(t)[K_0(t)]^4 t(16-228t^{2}+45t^{4})\D t={}&0,\label{eq:IKM14n}\\\int_0^\infty [K_0(t)]^5 t(16-228t^{2}+45t^{4})\D t={}&24,\label{eq:IKM05n}\\\int_0^\infty [I_0(t)]^{2}[K_0(t)]^4t(2-85t^{2}+72t^{4})\D t={}&0,\label{eq:IKM24n}\\\int_0^\infty I_0(t)[K_0(t)]^5t(2-85t^{2}+72t^{4})\D t={}&0,\label{eq:IKM15n}\\\int_0^\infty [K_0(t)]^6t(2-85t^{2}+72t^{4})\D t={}&\frac{15}{2}.\label{eq:IKM06n}
\end{align}}
\end{conjecture}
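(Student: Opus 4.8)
The plan is to obtain all five identities from one structural fact about Vanhove's operators. For fixed $n=a+b\in\{5,6\}$, every product $I_0^{a}K_0^{b}$ solves one and the same linear ODE $\mathcal L_n y=0$, where $\mathcal L_n$ is Vanhove's operator, i.e.\ the $n$-th symmetric power of the modified Bessel operator $\theta^2-t^2$ with $\theta=t\,\frac{\D}{\D t}$; it has order $n+1$. The weights $w_5(t)=t(16-228t^2+45t^4)$ and $w_6(t)=t(2-85t^2+72t^4)$ in the conjecture are, I claim, precisely the polynomials for which $w_n(t)\,y(t)$ is an exact derivative modulo $\mathcal L_n$: there is a universal certificate $F=\sum_{i=0}^{n}f_i(t)\,y^{(i)}(t)$, with polynomial coefficients $f_i$ depending only on $n$, such that $\frac{\D F}{\D t}=w_n(t)\,y(t)$ for every solution $y$. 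Granting this, each sum rule collapses to a boundary term,
\[
\sum_{j=0}^{2}c^{(n)}_j\,\IKM(a,b;2j+1)=\bigl[F(t)\bigr]_{t\to0}^{\,t\to\infty},
\qquad (c^{(5)}_j)=(16,-228,45),\quad (c^{(6)}_j)=(2,-85,72).
\]
Conceptually, the homogeneous four-term Borwein--Salvy recursion with coefficients \eqref{crec56} is the ``interior'' relation satisfied by the moments for all exponents, whereas the present three-term identity is the extra ``boundary'' relation enjoyed by the particular solution that decays at infinity; this is exactly why it trims the dimension bound from $3$ down to $\lfloor(a+b-1)/2\rfloor=2$.

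First I would construct the certificate by creative telescoping. Expanding $\frac{\D F}{\D t}$, eliminating the top derivative $y^{(n+1)}$ through $\mathcal L_n y=0$, and matching the coefficients of $y,y',\dots,y^{(n)}$ reduces the requirement $\frac{\D F}{\D t}=w_n\,y$ to a finite $\mathbb Q$-linear system for the coefficients of the $f_i$ (taken polynomial of bounded degree). Its solvability amounts to the de Rham class of $w_n(t)\,\D t$ being exact in the twisted cohomology attached to $\mathcal L_n$; the already-established $(a,b)=(2,3)$ identity of \cite{BBBG2008}, which uses this same $w_5$, confirms that the $n=5$ certificate exists, and I expect the $n=6$ case to behave identically. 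Crucially, a single certificate serves every admissible $(a,b)$, so the disparity among the five right-hand sides is forced to originate entirely at the boundary.

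Next I would evaluate the two endpoints. Since $a<b$ in all five cases, $I_0^{a}K_0^{b}\sim t^{-n/2}e^{(a-b)t}$, and the same exponential decay survives in every derivative, so $F(t)\to0$ as $t\to\infty$. At $t\to0$ I would insert the expansions $I_0(t)=1+\tfrac{t^2}{4}+\cdots$ and $K_0(t)=-\bigl(\ln\tfrac{t}{2}+\gamma\bigr)I_0(t)+\sum_{m\ge1}\tfrac{H_m}{(m!)^2}\bigl(\tfrac{t}{2}\bigr)^{2m}$ into $F=\sum_i f_i\,(I_0^{a}K_0^{b})^{(i)}$. Because the coefficients in the expansion of $K_0$ are rational apart from the elementary constants $\gamma$ and $\ln2$, the limit $F(0)$ is a polynomial in $\gamma$ and $\ln2$ over $\mathbb Q$ once the negative powers of $t$ and the powers of $\ln t$ cancel; the remaining transcendental parts then cancel as well, yielding $0$ when $a\ge1$ (identities \eqref{eq:IKM14n}, \eqref{eq:IKM24n}, \eqref{eq:IKM15n}) and the rational residues $24$ and $\tfrac{15}{2}$ in the pure-$K_0$ cases \eqref{eq:IKM05n}, \eqref{eq:IKM06n}.

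The delicate part is this $t\to0$ computation for the pure-$K_0$ integrals, where $b=n$ realises the maximal logarithmic order $(\ln t)^{n}$: one must propagate the iterated-logarithm asymptotics of $K_0^{5}$ and $K_0^{6}$ and their first $n$ derivatives through the certificate and verify that all powers of $\ln t$, and then all occurrences of $\gamma$ and $\ln2$, cancel to leave a single rational number. The vanishing cases $a\ge1$ are easier, since $b=n-a<n$ gives a milder singularity for which the boundary term collapses to $0$. I would validate the two nonzero constants $24$ and $\tfrac{15}{2}$ against the $1200$-digit numerics recorded in \cite{BBBG2008} and against the proven $(2,3)$ relation; the modular-form machinery flagged in the abstract is, by contrast, reserved for the genuinely exceptional relation $\IKM(2,6;1)=72\,\IKM(2,6;3)$, which lies outside the reach of this operator--boundary mechanism.
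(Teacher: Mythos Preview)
Your plan is in the right spirit but differs from the paper's argument and carries a genuine gap.

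The gap is in your existence argument for the certificate. You infer that the de Rham class of $w_5\,\D t$ is exact from the single known relation $\int_0^\infty w_5\,I_0^2K_0^3\,\D t=0$. That inference fails: the twisted $H^1_{\mathrm{dR}}$ attached to the fifth symmetric power of the Bessel operator is not one-dimensional, so the vanishing of one period does not force the class to be trivial. You could repair this by explicitly solving your linear system and exhibiting $F$ for $n=5,6$; but then the ``delicate'' $t\to0$ step---propagating $(\log t)^n$ through an order-$n$ certificate and checking that every occurrence of $\gamma$ and $\log 2$ cancels to leave exactly $24$ and $\tfrac{15}{2}$---still has to be carried out, and you have only described it.

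The paper bypasses both difficulties by working one level up. Instead of seeking a certificate inside the ODE annihilating the $N$-factor target, it uses the Borwein--Salvy operator $L_{N+2}$ that annihilates $(N{+}1)$-factor products. Starting from $0=\int_0^\infty (I_0/t)\,L_{N+2}(K_0^{N+1})\,\D t$ and moving $L_{N+2}$ to its adjoint, one obtains $L^*_{N+2}[I_0/t]$ explicitly as $c\bigl[tA(t^2)I_0+t^2B(t^2)I_1\bigr]$; the companion calculation with $K_0/t$ against $I_0K_0^{N}$ yields $c\bigl[tA(t^2)K_0-t^2B(t^2)K_1\bigr]$. Exactly one integration-by-parts step contributes a nonzero boundary term, a clean factorial read off directly with no log expansion. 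Subtracting the two and invoking the Wro\'nskian $I_0K_1+I_1K_0=1/t$ strips all Bessel factors from the weight, leaving $\int_0^\infty tB(t^2)\,[K_0(t)]^{N}\,\D t$ equal to that factorial constant; the non-vacuum cases follow identically with zero boundary contribution. Thus the polynomial and the right-hand side are produced constructively in one stroke, with no linear system to solve and no iterated-log asymptotics to chase.
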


In our recent work \cite[\S3]{Zhou2017WEF}, we have verified \eqref{eq:IKM14n} through explicit evaluations of $ \IKM(1,4;1)$, $ \IKM(1,4;3)$ and $\IKM(1,4;5)$. In \cite[Lemma 3.4]{Zhou2018LaportaSunrise}, we confirmed \eqref{eq:IKM24n} and \eqref{eq:IKM15n}, using a special differential operator of fourth order; a similar service was performed on \eqref{eq:IKM05n} and \eqref{eq:IKM06n} in \cite[Proposition 5.3 and Lemma 5.8]{Zhou2017BMdet}. In \S\ref{sec:dimQ_Vanhove} of this paper, we give a unified proof of all the identities in Conjecture \ref{conj:BBBG_dimQ}, along with a generalization to arbitrarily many Bessel factors,  as described in the theorem below.
\begin{theorem}[$ \mathbb Q$-linear dependence of certain Bessel moments]\label{thm:Qlin_dim}When  $ a\in\mathbb Z\cap[1,b),b\in\mathbb Z_{\geq2}$, the  set \begin{align} \{\IKM(a,b;2k+1)|k\in\mathbb Z\cap[0,(a+b-1)/2]\}\label{eq:Qlin_non_vac}\end{align}is linearly dependent over $\mathbb Q$,  and \begin{align}
\dim_{\mathbb Q}\Span_{\mathbb Q}\{\IKM(a,b;2k+1)|k\in\mathbb Z_{\geq0}\}\leq\lfloor(a+b-1)/2\rfloor.
\label{ineq:dimQ_non_vac}\tag{\ref{eq:Qlin_non_vac}$'$}\end{align}

When $n\in\mathbb Z_{\geq2}$, the  set \begin{align} \{1\}\cup\{\IKM(0,n;2k+1)|k\in\mathbb Z\cap[0,(n-1)/2]\}\label{eq:Qlin_vac}\end{align}is linearly dependent over $\mathbb Q$, and\begin{align}
\dim_{\mathbb Q}\Span_{\mathbb Q}\{\IKM(0,n;2k+1)|k\in\mathbb Z_{\geq0}\}\leq\lfloor(n+1)/2\rfloor.\label{ineq:dimQ_vac}\tag{\ref{eq:Qlin_vac}$'$}
\end{align}

In the statements above, $ \lfloor x\rfloor$ stands for the greatest integer less than or equal to $x$.\end{theorem}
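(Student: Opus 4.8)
The plan is to realize each integrand $u_{a,b}(t):=[I_0(t)]^a[K_0(t)]^b$ with $a+b=n$ as a solution of Vanhove's order-$(n+1)$ operator, namely the $n$-th symmetric power of the modified Bessel operator $\theta^2-t^2$, where $\theta:=t\,\D/\D t$. Writing $x\mapsto e^{\lambda t}$ at the irregular point $t=\infty$ shows that the $n+1$ products $u_{a,b}$ have growth rates $\lambda=a-b\in\{-n,-n+2,\dots,n\}$, so the operator must take the weighted-homogeneous shape
\[
\mathcal L_n=\theta^{\,n+1}+\sum_{i=1}^{\lfloor(n+1)/2\rfloor}t^{2i}\,Q_i(\theta),
\]
with polynomials $Q_i$ whose leading parts assemble the characteristic polynomial $\prod_{j}(\lambda^2-j^2)$ (the highest term in each summand lands on the same power of $t$). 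For $n\in\{5,6\}$ this matches the Borwein--Salvy data in \eqref{crec56} after the substitution $Q_j(y)=(-1)^j p_{a+b,j}(-y-j)$; in particular $Q_0(\theta)=\theta^{\,n+1}$, so that $Q_0(0)=0$, a fact that will carry the whole improvement.

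First I would convert $\mathcal L_n u_{a,b}=0$ into a recursion on moments by pairing against $t^s$ and integrating by parts through the identity $\int_0^\infty t^s\,\theta(w)\,\D t=-(s+1)\int_0^\infty t^s w\,\D t$, valid for $s\ge0$ because $u_{a,b}$ and its $\theta$-iterates decay exponentially at $t=\infty$ (this needs $a<b$, guaranteed by $a\in\mathbb Z\cap[1,b)$, and is why $b\ge2$) and vanish against $t^{s+1}$ at $t=0$. Each $\theta^k$ turns into $(-(s+1))^k$, yielding
\[
\sum_{i=0}^{\lfloor(n+1)/2\rfloor}Q_i\!\bigl(-(s+2i+1)\bigr)\,\IKM(a,b;s+2i)=0\qquad(s\ge0).
\]
Solving this forward in $s$ expresses every moment through the initial data $\IKM(a,b;1),\IKM(a,b;3),\dots,\IKM(a,b;2\lfloor(n+1)/2\rfloor-1)$, which recovers the Borwein--Salvy bound $\lfloor(a+b+1)/2\rfloor$ and nothing better.

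The decisive step is to push the same computation to the boundary index $s=-1$. There the lowest term carries the coefficient $Q_0(0)=0$, so the (formally divergent) moment $\IKM(a,b;-1)$ drops out; regularizing with $\int_\epsilon^\infty$ and letting $\epsilon\to0^+$, the surviving contribution collapses to a single boundary value, giving
\[
\sum_{i=1}^{\lfloor(n+1)/2\rfloor}Q_i(-2i)\,\IKM(a,b;2i-1)=\lim_{t\to0^+}\bigl(\theta^{\,n}u_{a,b}\bigr)(t).
\]
Near $t=0$ the $t^0$-sector of $u_{a,b}$ is a polynomial in $\log t$ of degree exactly $b$ (from $K_0\sim-\log t$), so $\theta^{\,n}$ annihilates it precisely when $n>b$, i.e. $a\ge1$; the right-hand side then vanishes and the displayed identity is a nontrivial $\mathbb Q$-linear relation among the members of \eqref{eq:Qlin_non_vac}, whose index range $1,3,\dots,2\lfloor(n-1)/2\rfloor+1$ I would check coincides with the span above. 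This removes one initial datum and delivers \eqref{ineq:dimQ_non_vac}. In the vacuum case $a=0$ the $\log t$-polynomial has degree $n$, so $\theta^{\,n}$ returns a nonzero rational constant ($\pm n!$ up to the normalization of $Q_i$), turning the relation inhomogeneous: this is exactly why the constant $1$ must be adjoined in \eqref{eq:Qlin_vac}, and the count then yields \eqref{ineq:dimQ_vac}. Specializing to $n\in\{5,6\}$ reproduces all five identities of Conjecture \ref{conj:BBBG_dimQ}, including the constants $24$ and $\tfrac{15}{2}$.

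The hard part will be pinning down $\mathcal L_n$ rigorously—either deriving the closed form of the symmetric power or, more economically, extracting just the coefficients $Q_i(-2i)$ and the top term $\theta^{\,n+1}$—together with the careful $\epsilon\to0^+$ analysis, where a formally divergent moment must be seen to cancel against the boundary term rather than to genuinely diverge. A secondary obstacle is bookkeeping the parity split $n$ even versus odd, so that the index ranges of the sum rule match \eqref{eq:Qlin_non_vac} and \eqref{eq:Qlin_vac} on the nose and the equalities $\lfloor(n+1)/2\rfloor-1=\lfloor(n-1)/2\rfloor$ are used correctly; the endpoint estimates at $t=0$ and $t=\infty$ must also be stated uniformly enough to justify every integration by parts.
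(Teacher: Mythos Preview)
Your approach is correct and, at its core, uses the same ingredient as the paper---the Borwein--Salvy symmetric-power operator $L_{n+1}=\theta^{\,n+1}+\sum_{i\ge1}t^{2i}Q_i(\theta)$ annihilating the $n$-fold products $[I_0]^a[K_0]^b$---together with integration by parts in $t$. The routes diverge in how the extra relation is extracted. The paper computes $L_{n+1}^*(I_0/t)$ and $L_{n+1}^*(K_0/t)$ separately (via the Bronstein--Mulders--Weil recursion), obtains expressions of the form $tA(t^2)I_0+t^2B(t^2)I_1$ and $tA(t^2)K_0-t^2B(t^2)K_1$, and then subtracts, invoking the Wro\'nskian $I_0K_1+I_1K_0=1/t$ to strip off the first-order Bessel functions; the resulting sum rule lives one Bessel factor \emph{lower}. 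You instead integrate $L_{n+1}u_{a,b}=0$ directly against $t^{-1}$, and the key observation $Q_0(0)=0$ (equivalently, the single IBP $\int_\epsilon^\infty t^{-1}\theta(v)\,\D t=-v(\epsilon)$) replaces the Wro\'nskian trick entirely: the formally divergent moment never appears, and all information is concentrated in the boundary value $(\theta^{\,n}u_{a,b})(0^+)$, which is $0$ for $a\ge1$ and $(-1)^n n!$ for $a=0$.

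What each buys: your argument is shorter and handles all $a\in[0,b)$ uniformly, with no auxiliary appearance of $I_1,K_1$; the nontriviality of the relation follows automatically from the vacuum case (the right-hand side $(-1)^n n!\neq0$ forces some $Q_i(-2i)\neq0$). The paper's route, by contrast, makes the polynomial $f_n$ explicit through the BMW recursion and ties the computation to the off-shell Vanhove operators $\widetilde L_n$ used elsewhere. Your caveats about the ``hard part'' are apt: the weighted-homogeneous shape of $\mathcal L_n$ and the vanishing of all intermediate boundary terms (both at $t=\infty$ by exponential decay and at $t=0$ for the $i\ge1$ pieces, since $t^{2i}\cdot(\log t)^{\text{power}}\to0$) are exactly what the BMW recursion supplies and what the paper's Lemma-by-Lemma IBP tracks; once those are in place, your $s=-1$ specialization is a clean substitute for the paper's Wro\'nskian step.
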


Here, we point out that the inequality in \eqref{ineq:dimQ_non_vac} follows immediately from the $ \mathbb Q$-linear dependence of the set in \eqref{eq:Qlin_non_vac}, because the Borwein--Salvy theory  \cite[Theorem 1.1]{BorweinSalvy2007} has already provided us with a linear recursion with non-vanishing integer coefficients, involving $\lfloor(a+b+3)/2\rfloor $ consecutive terms in the corresponding sequence  $ \{\IKM(a,b;2k+1)|k\in\mathbb Z_{\geq0}\}$.  Thus, our upper bound  given in \eqref{ineq:dimQ_non_vac} is exactly one dimension smaller than  what is inferrable from the  Borwein--Salvy recurrence.
Meanwhile, the inequality in   \eqref{ineq:dimQ_vac} gives the same upper bound on $\mathbb Q$-linear dimension as the Borwein--Salvy recursion.
What is new here is that the  $\mathbb Q$-linear basis can be constructed from a subset of  $ \{1\}\cup\{\IKM(0,n;2k+1)|k\in\mathbb Z\cap[0,(n-3)/2]\}$, rather than a subset of vacuum diagrams $ \{\IKM(0,n;2k+1)|k\in\mathbb Z\cap[0,(n-1)/2]\}$. Recently, a more insightful interpretation of the dimension bounds in Theorem \ref{thm:Qlin_dim}, based on de Rham cohomology classes, has been presented by Fres\'an--Sabbah--Yu \cite[Remark 6.8]{FresanSabbahYu2020b}.

The present author initially had thought that the bound in  \eqref{ineq:dimQ_non_vac} could no longer be improved, until a numerical  counterexample $\IKM(2,6;1)=72\IKM(2,6;3) $ suggested otherwise. Extending some modular techniques developed in \cite{Zhou2017WEF}, we will  verify this surprising $ \mathbb Q$-linear dependence  in \S\ref{sec:IKM261IKM263}, along with a few related results  in the theorem below.

\begin{theorem}[Some exceptional sum rules]\label{thm:8Bessel_surprise} If we define \begin{align}\IKMh(4,4;3):=
\int_0^\infty[I_0(t)K_0(t)]^2\left\{ [I_0(t)K_0(t)]^2-\frac{1}{4t^{2}} \right\}t^{3}\D t
\end{align}as an ``honorary Bessel moment'', then we have the following identities:\begin{align}\IKM(4,4;1)-72\IKMh(4,4;3)={}&\frac{7\log2}{2} ,\label{eq:IKM441IKM*443}\\\IKM(3,5;1)-72\IKM(3,5;3)={}&-\frac{5 \pi ^2}{12} ,\label{eq:IKM351IKM353}\\
\IKM(2,6;1)-72\IKM(2,6;3)={}&0,\label{eq:IKM261IKM263}\\\IKM(1,7;1)-72\IKM(1,7;3)={}&\frac{7\pi^{4}}{48}.\label{eq:IKM171IKM173}
\end{align}Moreover, the set $ \{\pi,\IKM(1,7;1),\IKM(1,7;5),\IKM(2,6;1),\IKM(2,6;5)\}$ is algebraically dependent, under the constraint of a non-linear sum rule\begin{align}
7 \pi ^4 \IKM(2,6;1)-6912[\IKM(1,7;1) \IKM(2,6;5)- \IKM(1,7;5) \IKM(2,6;1)]=\frac{45 \pi ^6}{16}.\label{eq:nonlin_sum_8Bessel}
\end{align}\end{theorem}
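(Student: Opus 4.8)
The plan is to realize every eight-Bessel moment $\IKM(a,b;2k+1)$ with $a+b=8$ as a period of a weight-$6$ modular object on $\Gamma_0(6)$, thereby reducing all five assertions to the evaluation of explicit period integrals. Concretely, I would start from Vanhove's differential operator attached to the eight-Bessel problem: the nine products $I_0^a K_0^{8-a}$ ($0\le a\le 8$) span the kernel of the symmetric eighth power of Bessel's operator, and after the Hauptmodul substitution $t=t(\tau)$ (extending the parametrization of \cite{Zhou2017WEF}) this operator acquires a weight-$6$ modular form. Under this substitution each solution $I_0^a K_0^{b}$ becomes a period vector and the moment $\IKM(a,b;2k+1)$ becomes the integral of a weight-$6$ form against $t^{2k}\,\D t$ along a path joining the cusps $0$ and $i\infty$; the logarithmic divergence of the formal integral $\IKM(4,4;3)$ at $t=\infty$, coming from $I_0(t)K_0(t)\sim 1/(2t)$, is exactly what the subtraction of $1/(4t^2)$ in $\IKMh(4,4;3)$ regularizes, so that the honorary moment is the genuine period in the $(4,4)$ row.

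For the four linear identities \eqref{eq:IKM441IKM*443}--\eqref{eq:IKM171IKM173}, the decisive point is that the coefficient $72$ depends only on $n=a+b=8$ and not on the splitting $(a,b)$. I would therefore exhibit a single weight-$6$ differential $\omega$, built from the Vanhove form, whose de Rham class is characterized by the property that pairing it against a solution kills the cuspidal contribution and leaves only a boundary term; the combination $t-72\,t^{3}$ is precisely the odd polynomial for which this happens, matching the period-polynomial degree $w-2=4$ of weight $6$. Pairing $\omega$ against the four cycles $I_0^a K_0^{8-a}$ then reduces, by residues at the two cusps, to elementary periods: the $(4,4)$ residue produces $\tfrac{7\log 2}{2}$ (with the $\log 2$ traceable to the regularization above), the $(3,5)$ and $(1,7)$ residues produce the Eisenstein-type periods $-\tfrac{5\pi^2}{12}$ and $\tfrac{7\pi^4}{48}$, while in the distinguished case $(2,6)$ the two cusp contributions cancel and the residue vanishes, yielding \eqref{eq:IKM261IKM263}. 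This vanishing is the modular shadow of the exceptional $\mathbb Q$-linear relation driving the whole paper.

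For the nonlinear identity \eqref{eq:nonlin_sum_8Bessel}, I would read the bracket $\IKM(1,7;1)\IKM(2,6;5)-\IKM(1,7;5)\IKM(2,6;1)$ as a $2\times 2$ period determinant of the two solution vectors $(\IKM(a,b;1),\IKM(a,b;5))$ for $(a,b)=(1,7)$ and $(2,6)$; the pair $\{1,5\}$ rather than $\{1,3\}$ is the natural independent data once the linear rule $\IKM(a,b;1)=72\,\IKM(a,b;3)+\text{const}$ is used to eliminate the weight-$3$ moment. This determinant is a Wronskian-type invariant of the underlying Picard--Fuchs system, hence computable by a Legendre (Riemann bilinear) relation for the weight-$6$ data, in the spirit of the Broadhurst--Mellit determinant formulae \cite{Zhou2017BMdet}; the transcendental weight $\pi^6$ and the rational factors $7$, $6912$, $\tfrac{45}{16}$ should emerge from this bilinear pairing together with the explicit constants $\tfrac{7\pi^4}{48}$ and $0$ furnished by the linear sum rules.

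The main obstacle I anticipate is twofold. First, pinning down the precise weight-$6$ differential $\omega$ and proving rigorously that the odd polynomial selecting it is $t-72\,t^3$, i.e.\ that the cuspidal pairing genuinely cancels, is the technical heart: this is where the modular input is indispensable and where a slip in the level or in the cusp-residue bookkeeping would corrupt every right-hand side at once. Second, the Legendre relation underlying \eqref{eq:nonlin_sum_8Bessel} must be established for two honestly independent solutions of a higher-order system, and reconciling its normalization with the regularized $(4,4)$ period and with the elementary constants of the linear rules is the most delicate bilinear computation; I expect it, rather than the individual linear identities, to carry the weight of the proof.
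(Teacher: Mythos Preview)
Your proposal is a plan rather than a proof, and its central mechanism differs from the paper's in a way that matters. You want to attack the eight-Bessel moments directly via the symmetric eighth power of the Bessel operator and a weight-$6$ modular parametrization, with the coefficient $72$ emerging from period-polynomial considerations in weight $6$. The paper does something structurally different: it never works with the order-$8$ operator. Instead it factors each $\IKM(a,8{-}a;2k{+}1)$ as a Hankel fusion of two five-Bessel objects $\IvKM(\cdot,\cdot;1|u)$, each annihilated by the \emph{third}-order Vanhove operator $\widetilde L_3$, and the modular input is the weight-$2$ form $Z_{6,3}$ on $\Gamma_0(6)_{+3}$, not a weight-$6$ form. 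The coefficient $72$ then comes not from a period polynomial but from a concrete commutator identity: $\widetilde L_3$ is skew-symmetric, and commuting it with the explicit logarithmic multiplier $\ell(u)=\frac{1}{192}[3\log(-u)-4\log(4-u)+\log(16-u)]$ produces $3(uD^2+D^1)$ plus a rational kernel; integrating by parts (``Vanhove reflection'') and tracking boundary terms via asymptotics at $u\to0^-$ and $u\to-\infty$ yields the left halves of the three identities for $(4,4),(3,5),(2,6)$. The right halves are then obtained by Chan--Zudilin base change to the Ramanujan cubic invariant $\alpha_3$, where the relevant integrals collapse to $\int_{-1}^{1} x\,[P_{-1/3}(\pm x)]^{\cdots}\,\D x$, two of which vanish by parity and one of which is a known closed form. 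The $\tfrac{7\log 2}{2}$ in the $(4,4)$ row is a genuine boundary contribution from these integrations by parts, not a cusp residue in your sense.

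Two further points where your outline would not land on the paper's argument. First, the $(1,7)$ identity is not obtained by a fourth residue computation parallel to the other three; the paper derives it from the $(3,5)$ identity via the Broadhurst--Mellit ``Crandall relations'' $\pi^{2}\IKM(3,5;2k{+}1)-\IKM(1,7;2k{+}1)\in\mathbb Q\pi^{4}$, which you do not mention. Second, for the nonlinear rule the paper does not set up a fresh Legendre/bilinear relation: it quotes the already-proved $3\times 3$ Broadhurst--Mellit determinant $\det\big(\IKM(a,8{-}a;2j{-}1)\big)_{a\in\{1,2,3\},\,j\in\{1,2,3\}}=\frac{5\pi^{8}}{2^{19}\cdot 3}$, uses the Crandall relations to replace the $(3,5)$ row by $(0,\pi^{2}/2^{7},\pi^{2}/2^{8})$, and then uses the just-proved linear rules to kill the $(2,6)$ entry in the second column, after which a single cofactor expansion gives \eqref{eq:nonlin_sum_8Bessel}. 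Your Wronskian/Legendre picture is morally compatible with this, but as written it leaves both the determinant evaluation and its normalization as black boxes, which is exactly where the work lies.
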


We note that the algebraic relations [over $ \mathbb Q(\pi,\log 2)$] displayed in \eqref{eq:IKM441IKM*443}--\eqref{eq:nonlin_sum_8Bessel} are truly aberrant, in that they do not belong to the Broadhurst--Roberts ideal \cite[\S4.2]{Zhou2020BRquad} that presumably\ exhausts algebraic relations for on-shell Bessel moments.

\noindent\textbf{Acknowledgments.} The author thanks David Broadhurst and  Javier Fres\'an for their perceptive comments on the initial draft of this manuscript. The author is grateful to the anonymous referee for suggestions on improving the presentations of this work. \section{Applications of Vanhove's differential equations to $ \mathbb Q$-linear dependence\label{sec:dimQ_Vanhove}}

As in our recent work \cite[\S2]{Zhou2017BMdet}, we introduce abbreviations for off-shell Feynman diagrams \begin{align}
\IvKM(a+1,b;m|u):={}&\int_0^\infty I_{0}(\sqrt{u}t)[I_0(t)]^a[K_0(t)]^{b}t^{m}\D t,\\\IKvM(a,b+1;m|u):={}&\int_0^\infty K_{0}(\sqrt{u}t)[I_0(t)]^a[K_0(t)]^{b}t^{m}\D t,
\end{align} whenever  the non-negative integers $ a,b,m\in\mathbb Z_{\geq0}$ ensure convergence of the integrals. For a smooth bivariate function $ f(t,u)$, we define $ D^0f(t,u)=f(t,u)$, and $ D^{n+1}f(t,u)=\frac{\partial}{\partial u}D^nf(t,u)$ for all $ n\in\mathbb Z_{\geq0}$. For convenience, we will also write $ D^n f(t,1)$ for evaluating $ D^nf(t,u)$ at $u=1$, and so forth.

Vanhove's operator $ \widetilde L_n$ is an $n$th order holonomic differential operator in the variable $u$, which acts on off-shell Feynman diagrams $ \IvKM(a,n+2-a;1|u),a\in\mathbb Z\cap[1,(n+2)/2)$ to produce constants. For example, Vanhove's third- and fourth-order operator can be written explicitly as follows  \cite[Table 1, $n=4$ and $n=5$]{Vanhove2014Survey}: \begin{align}\begin{split}
\widetilde {L}_3:={}&u^{2}(u-4)(u-16)D^{3}+6u(u^2-15u+32)D^{2}\\{}&+(7u^2-68u+4)D^{1}+(u-4)D^{0},\end{split}
\\\begin{split}\widetilde L_4:={}&
u^2(u-25) (u-9) (u-1) D^{4}+2 u (5 u^3-140 u^2+777 u-450)D^3\\{}&+(25 u^3-518 u^2+1839 u-450)D^{2}+(3 u-5) (5 u-57)D^{1}+(u-5)D^{0}.\end{split}\label{eq:VL4}\end{align}They satisfy the following differential equations of Vanhove's type \cite[Lemmata 2.2 and 3.1]{Zhou2017BMdet}: \begin{align}&\begin{cases}
\widetilde L_3\IvKM(2,3;1|u)= 0,& \forall u\in(0,4);\ \ \\
\widetilde L_{3}\IvKM(1,4;1|u)=-3, & \forall u\in(0,16); \\\widetilde L_3\IKvM(1,4;1|u)=\frac{3}{4}, & \forall u\in(0,\infty).
\end{cases}
\\&
\begin{cases}\widetilde L_4\IvKM(2,4;1|u)= 0,& \forall u\in(0,9);\\
\widetilde L_4\IvKM(1,5;1|u)= -\frac{15}{2}, & \forall u\in(0,25); \\\widetilde L_4\IKvM(1,5;1|u)=\frac{3}{2}, & \forall u\in(0,\infty).
\end{cases}
\end{align}

In what follows, we use Vanhove's differential equations to produce an algorithmic proof of  Theorem \ref{thm:Qlin_dim}, which also recovers Conjecture \ref{conj:BBBG_dimQ} as special cases.
\begin{theorem}[Sum rules for arbitrarily many Bessel factors]For each $ n\in\mathbb Z_{>0}$, there exists a non-zero polynomial    $f_n(\xi)\in\frac{1}{n+4}\mathbb Z[\xi]$ whose degree  does not exceed $\lfloor (n+1)/2\rfloor  $, such that   the following  inhomogeneous sum rule  \begin{align}
\int_0^\infty[K_0(t)]^{n+2}tf_n(t^2)\D t=(n+1)!\equiv\Gamma(n+2)\label{eq:vac_sum_rule}
\end{align}holds. Accordingly, the same polynomial applies to a homogeneous sum rule\begin{align}
\int_0^\infty [I_{0}(t)]^{a}[K_0(t)]^{n+2-a}tf_n(t^2)\D t=0,\label{eq:nonvac_sum_rule}
\end{align}where $ a\in\mathbb Z\cap[1,(n+2)/2)$.

As a result, the statements about $ \mathbb Q$-linear dependence in Theorem \ref{thm:Qlin_dim} are true. \end{theorem}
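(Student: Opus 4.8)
\emph{The engine.} Both off-shell Bessel factors solve one and the same second-order equation in the external variable. Writing $D=\partial/\partial u$ and $\mathcal D_u:=uD^2+D$, the modified Bessel equation yields, after a short computation,
\[
\mathcal D_u\,I_0(\sqrt u\,t)=\tfrac{t^2}{4}\,I_0(\sqrt u\,t),\qquad \mathcal D_u\,K_0(\sqrt u\,t)=\tfrac{t^2}{4}\,K_0(\sqrt u\,t),
\]
so each off-shell factor is an eigenfunction of $\mathcal D_u$ with eigenvalue $t^2/4$. The plan is to feed this into Vanhove's operators. First I would construct $\widetilde L_n$ for every $n$ (recursively, continuing the pattern of $\widetilde L_3,\widetilde L_4$ and following Vanhove), together with the two associated families of Vanhove-type equations: the homogeneous ones $\widetilde L_n\IvKM(a+1,b;1|u)=0$ for $a\in\mathbb Z\cap[1,(n+2)/2)$, and the inhomogeneous ones carrying explicit rational constants, each valid on the $u$-interval dictated by the convergence thresholds of the corresponding off-shell diagram.

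\emph{Extracting a universal polynomial.} Since $\widetilde L_n=\sum_{j=0}^n p_{n,j}(u)D^j$ commutes with $\int_0^\infty(\cdot)\,\D t$, applying it to an off-shell diagram amounts to applying it to the single off-shell factor $g(u)=\mathcal K(\sqrt u\,t)$ inside the integral. Using $\mathcal D_u g=\tfrac{t^2}{4}g$ repeatedly to eliminate all $u$-derivatives of order $\ge2$, I would reduce
\[
\widetilde L_n\,g=A_n(u,t^2)\,g+B_n(u,t^2)\,g',
\]
with $A_n,B_n$ polynomial in $t^2$ of $t^2$-degree controlled by the order $n$. Evaluating at $u=1$ turns $g$ into the on-shell factor and turns $g'$ into a $\mathcal K_1$-term ($I_0'=I_1$, $K_0'=-K_1$); integrating this $\mathcal K_1$-term by parts in $t$ and reabsorbing the resulting derivatives through the $t$-Bessel equations satisfied by $I_0(t),K_0(t)$ recombines the whole expression into a single polynomial $f_n(t^2)$ multiplying $t$ times the original Bessel product. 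The decisive point is that $A_n,B_n$ issue solely from the common eigen-equation, so the \emph{same} $f_n$ serves every product $[I_0]^a[K_0]^{n+2-a}$; tracking the integer coefficients of $\widetilde L_n$, the eigenvalue $\tfrac14$, and the rational constant produced by the integration by parts then places $f_n\in\frac1{n+4}\mathbb Z[\xi]$ with $\deg f_n\le\lfloor(n+1)/2\rfloor$.

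\emph{The sum rules.} With $f_n$ in hand, the two sum rules are the $u=1$ specializations of Vanhove's equations after this reduction. For a mixed product with $a\in\mathbb Z\cap[1,(n+2)/2)$ the homogeneous equation and the vanishing boundary terms give $\int_0^\infty[I_0]^a[K_0]^{n+2-a}t f_n(t^2)\,\D t=0$, which is \eqref{eq:nonvac_sum_rule}; for the pure-$K$ product the inhomogeneous constant together with the boundary contribution combine to $(n+1)!=\Gamma(n+2)$, which is \eqref{eq:vac_sum_rule}. I would certify $f_n\neq0$ separately, for which it suffices that the inhomogeneous value $(n+1)!$ does not vanish, so that both identities are genuine and not vacuous.

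\emph{Harvesting Theorem~\ref{thm:Qlin_dim}.} Writing $t f_n(t^2)=\sum_{k=0}^{d}c_k\,t^{2k+1}$ with $d\le\lfloor(n+1)/2\rfloor$ and setting $a+b=n+2$, the identity \eqref{eq:nonvac_sum_rule} reads $\sum_{k=0}^{d}c_k\IKM(a,b;2k+1)=0$, a nontrivial $\mathbb Q$-relation among the set \eqref{eq:Qlin_non_vac}; combined with the Borwein--Salvy recurrence this yields \eqref{ineq:dimQ_non_vac}. Likewise \eqref{eq:vac_sum_rule} is a nontrivial relation involving $1$ and the vacuum moments, giving the dependence of \eqref{eq:Qlin_vac} and, with Borwein--Salvy, the bound \eqref{ineq:dimQ_vac} realized by a basis that omits the top vacuum moment. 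The main obstacle I anticipate is the middle step: proving, uniformly in $n$, that the reduction-plus-integration-by-parts really does collapse to one polynomial of degree at most $\lfloor(n+1)/2\rfloor$ and that the attendant constants are exactly $0$ and $(n+1)!$. In particular, constructing $\widetilde L_n$ for all $n$ and pinning down the inhomogeneous term is where the genuine work lies, the passage to the $\mathbb Q$-linear dimension bounds being comparatively formal.
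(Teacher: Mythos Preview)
Your reduction $\widetilde L_n g=A(u,t^2)\,g+B(u,t^2)\,\partial_u g$ via the eigen-relation $\mathcal D_u g=\tfrac{t^2}{4}g$ is correct and matches the paper's identity $t\widetilde L_n g=\frac{(-1)^n}{2^n}L^*_{n+2}(g/t)$ for $g=I_0(\sqrt u\,t)$ or $K_0(\sqrt u\,t)$. The gap is precisely where you locate the ``main obstacle'': your claim that after setting $u=1$ the $I_1$- (or $K_1$-) term coming from $\partial_u g|_{u=1}$ can be integrated by parts in $t$ and ``reabsorbed'' so that everything collapses to a single polynomial in $t^2$ times the original product $[I_0]^a[K_0]^{n+2-a}$. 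This does not work. Integrating $\int P(t^2)\,t\,I_1(t)\,[I_0]^{a-1}[K_0]^{b}\,\D t$ by parts only transfers the derivative onto the remaining Bessel product, generating fresh $I_1$ and $K_1$ terms; the $t$-Bessel equation cannot trade these back for $I_0,K_0$ alone. So a single Vanhove equation at $u=1$ leaves you with a relation that still mixes zeroth- and first-order Bessel functions, not a $\mathbb Q$-linear relation among the moments $\IKM(a,b;2k+1)$. The vacuum case is likewise unaddressed: no off-shell diagram in the Vanhove list specializes at $u=1$ to the pure product $[K_0]^{n+2}$.

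The idea you are missing is a Wro\'nskian subtraction. One runs the reduction \emph{twice}, once with off-shell factor $I_0(\sqrt u\,t)$ and once with $K_0(\sqrt u\,t)$; at $u=1$ the two resulting identities share the same polynomials $A,B$ but carry $(I_0,\tfrac t2 I_1)$ versus $(K_0,-\tfrac t2 K_1)$, integrated against products that differ by a single $I_0\leftrightarrow K_0$ swap. Subtracting and using $I_0(t)K_1(t)+I_1(t)K_0(t)=1/t$ annihilates the $A$-contribution entirely and leaves $\int t\,B(t^2)\,[I_0]^{a-1}[K_0]^{b-1}\,\D t=\text{const}$, a genuine moment relation with one fewer Bessel factor. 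This single stroke (i) manufactures the pure-$K$ vacuum integral, (ii) delivers the sharp degree bound $\lfloor(n+1)/2\rfloor$, since it is $B$ rather than $A$ that survives, and (iii) produces the denominator $n+4$ from the difference of the two inhomogeneous constants. The paper in fact carries this out directly in the $t$-variable rather than through $\widetilde L_n$: it computes $L^*_{n+1}\tfrac{I_0}{t}$ and $L^*_{n+1}\tfrac{K_0}{t}$ via the Bronstein--Mulders--Weil recursion for the Borwein--Salvy operator, integrates each against the complementary $n$-fold product (the very first integration by parts contributing the explicit boundary terms $(-1)^n n!$ and $-(-1)^n(n-1)!$, all later ones vanishing), and then subtracts. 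With this Wro\'nskian step inserted, your harvesting of Theorem~\ref{thm:Qlin_dim} goes through.
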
\begin{proof}We recall from \cite[Lemma 4.2]{Zhou2017BMdet} that we have the following differential equations of Vanhove's type:\begin{align}
\begin{cases}\widetilde L_n\IvKM(1,n+1,1|u)=-\frac{(n+1)!}{2^{n}}, &  \\
\widetilde L_n\IKvM(1,n+1,1|u)=\frac{n!}{2^{n}}, &  \\\widetilde
L_n\IvKM(j,n+2-j,1|u)=0, & \forall j\in\mathbb Z\cap[2,\frac{n}{2}+1], \\ \widetilde L_n\IKvM(j,n+2-j,1|u)=0,&\forall j\in\mathbb Z\cap[2,\frac{n+1}{2}],
\end{cases}\label{eq:VanhoveLnODE}
\end{align} which can be verified through  the relations \begin{align}
\left\{ \begin{array}{c}
t\widetilde L_nI_0(\sqrt{u}t)=\frac{(-1)^n}{2^n}L^*_{n+2}\frac{I_0(\sqrt{u}t)}{t}, \\
t\widetilde L_nK_0(\sqrt{u}t)=\frac{(-1)^n}{2^n}L^*_{n+2}\frac{K_0(\sqrt{u}t)}{t}, \\
\end{array} \right.
\end{align}and integrations by parts in the variable $t$. Here,\begin{align}
L^*_{n+2}g(t,u)=\sum_{k=0}^{n+2}(-1)^{k}\frac{\partial^k}{\partial t^k}[\lambda_{n+2,k}(t)g(t,u)],
\label{eq:form_adj}\end{align} defines the formal adjoint to the Borwein--Salvy operator \begin{align}L_{n+2}=\mathscr L_{n+2,n+2}=\sum_{k=0}^{n+2}\lambda_{n+2,k}(t)\frac{\partial^k}{\partial t^k},\end{align}which in turn, is constructed   by the Bronstein--Mulders--Weil algorithm \cite[Theorem 1]{BMW1997}:\begin{align}
\begin{cases}\mathscr L_{n+2,0}=\left( t\frac{\D }{\D t} \right)^0,\mathscr L_{n+2,1}=t\frac{\D }{\D t}, &  \\
\mathscr L_{n+2,k+1}=t\frac{\D }{\D t}\mathscr L_{n+2,k}-k(n+2-k)t^{2}\mathscr L_{n+2,k-1}, &          \forall k\in\mathbb Z\cap[1,n+1]. \\
\end{cases}\label{eq:BMW}
\end{align}We also remind our readers of the fact that the Borwein--Salvy operator  $ L_{n+2}$ is the $(n+1)$st symmetric power of the Bessel differential operator, so that it annihilates every member in the set $ \{[I_0(t)]^a[K_0(t)]^{n+1-a}|a\in\mathbb Z\cap[0,n+1]\}$.

As we specialize the procedure of integration by parts in \cite[(4.24)]{Zhou2017BMdet} to $ u=1$, we have\begin{align}\begin{split}
0={}&\int_0^\infty \frac{I_0(t)}{t}L_{n+1}\{[K_0(t)]^n\}\D t=\int_0^\infty \frac{I_0(t)}{t}\mathscr L_{n+1,n+1}\{[K_0(t)]^n\}\D t\\={}&\int_0^\infty I_0(t)\frac{\D}{\D t}\mathscr L_{n+1,n}\{[K_0(t)]^n\}\D t-n\int_0^\infty tI_0(t)\mathscr L_{n+1,n-1}\{[K_0(t)]^n\}\D t\\={}&-(-1)^{n}n!-\int_0^\infty\mathscr  L_{n+1,n}\{[K_0(t)]^n\}\frac{\D I_0(t)}{\D t}\D t\\{}&-n\int_0^\infty tI_0(t)\mathscr L_{n+1,n-1}\{[K_0(t)]^n\}\D  t.\end{split}\label{eq:int_part_sunrise}
\end{align} From \cite[Lemma 4.2]{Zhou2017BMdet}, we know that all subsequent integrations by parts will not result in any boundary contributions like $ (-1)^{n}n!$. Without loss of generality, we assume that $ n\in\mathbb Z_{\geq2}$, and carry on the computations above a few steps further:{\allowdisplaybreaks\begin{align}\begin{split}
(-1)^{n}n!={}&-\int_0^\infty I_1(t)\mathscr  L_{n+1,n}\{[K_0(t)]^n\}\D t-n\int_0^\infty tI_0(t)\mathscr L_{n+1,n-1}\{[K_0(t)]^n\}\D  t\\={}&\int_0^\infty\left\{ \frac{\D [tI_1(t)]}{\D t}-ntI_0(t) \right\}\mathscr L_{n+1,n-1}\{[K_0(t)]^n\}\D  t\\{}&+2(n-1)\int_0^\infty t^{2}I_1(t)\mathscr  L_{n+1,n-2}\{[K_0(t)]^n\}\D t\\={}&-(n-1)\int_0^\infty tI_0(t)\mathscr L_{n+1,n-1}\{[K_0(t)]^n\}\D  t\\{}&+2(n-1)\int_0^\infty t^{2}I_1(t)\mathscr  L_{n+1,n-2}\{[K_0(t)]^n\}\D t.\end{split}
\end{align}} Arguing along this line, and exploiting the following identities for $ m\in\mathbb Z_{\geq0}$:\begin{align}
\frac{\D}{\D t}[t^{2 m+1} I_1(t)]={}&2 m t^{2 m} I_1(t)+t^{2 m+1} I_0(t),\\\frac{\D}{\D t}[t^{2 m} I_0(t)]={}&t^{2 m} I_1(t)+2 m t^{2 m-1} I_0(t),
\end{align} we have \begin{align}\begin{split}
(-1)^{n}n!={}&\int_0^\infty [K_0(t)]^nL_{n+1}^*\frac{I_0(t)}{t}\D t\\={}&(n-1)\int_0^\infty [tA_{n,\lfloor(n-1)/2\rfloor}(t^2)I_0(t)+t^{2}B_{n,\lfloor (n-2)/2\rfloor}(t^2)I_1(t)][K_0(t)]^n\D  t\end{split}\label{eq:A+B}
\end{align}where $ A_{n,\lfloor(n-1)/2\rfloor}$ (resp.~$B_{n,\lfloor (n-2)/2\rfloor}$) is a  polynomial with integer coefficients, whose degree does not exceed $\lfloor(n-1)/2\rfloor $ (resp.~$\lfloor(n-2)/2\rfloor$).

By a similar procedure, with the following identities for $ m\in\mathbb Z_{\geq0}$:\begin{align}
\frac{\D}{\D t}[t^{2 m+1} K_1(t)]={}&2 m t^{2 m} K_1(t)-t^{2 m+1} K_0(t),\\\frac{\D}{\D t}[t^{2 m} K_0(t)]={}&-t^{2 m} K_1(t)+2 m t^{2 m-1} K_0(t),
\end{align} we can deduce from \cite[(4.26)]{Zhou2017BMdet} the following equation:
 \begin{align}\begin{split}
-(-1)^{n}(n-1)!={}&\int_0^\infty I_0(t)[K_0(t)]^{n-1}L_{n+1}^*\frac{K_0(t)}{t}\D t\\={}&(n-1)\int_0^\infty [tA_{n,\lfloor(n-1)/2\rfloor}(t^2)K_0(t)-t^{2}B_{n,\lfloor (n-2)/2\rfloor}(t^2)K_1(t)]I_0(t)[K_0(t)]^{n-1}\D  t.\end{split}\label{eq:A-B}
\end{align}

Bearing in mind the Wro\'nskian relation $I_0(t)K_1(t)+I_1(t)K_0(t)=\frac{1}{t} $, we may subtract \eqref{eq:A-B} from \eqref{eq:A+B}, arriving at\begin{align}
(-1)^{n}(n-2)!(n+1)=\int_0^\infty tB_{n,\lfloor (n-2)/2\rfloor}(t^2)[K_0(t)]^{n-1}\D  t.
\end{align}Choosing $ f_{n-3}(\xi)=\frac{(-1)^{n}}{n+1}B_{n,\lfloor (n-2)/2\rfloor}(\xi)\in\frac{1}{n+1}\mathbb Z[\xi]$, we can verify the first sentence in our theorem.

So far, we have verified the inhomogeneous sum rules for vacuum diagrams in any loop order. With the last two lines in \eqref{eq:VanhoveLnODE}, we can establish the homogeneous sum rules for non-vacuum diagrams in a similar vein, if not simpler.   \end{proof}
\begin{remark}
As a service to the quantum field community, we list our computations of $ f_n(t^2),n\in\mathbb Z\cap[1,10]$ in Table \ref{tab:fn_examples}. Clearly, the entries  $ f_3$ and $f_4$ allow us to verify all the identities declared in Conjecture \ref{conj:BBBG_dimQ}. (It is our hope that, by working a little harder, one can perhaps show that $ f_n(\xi)\in\mathbb Z[\xi]$ and $ f_n(0)=2^{n+1}$ for all $ n\in\mathbb Z_{>0}$. However, we are not going to pursue in this direction, as it will not affect the qualitative structure of $\mathbb Q$-linear dependence.)
\begin{table}[ht]
{\caption{The first ten polynomials  that satisfy  the  inhomogeneous sum rules \eqref{eq:vac_sum_rule} }\label{tab:fn_examples}}\begin{small}\begin{align*}\begin{array}{c|l}\hline\hline  n&\vphantom{\frac\int\int}f_n(t^2)\\\hline \vphantom{\frac{\int}{1}}1&4-3t^2\\2&8(1-4t^2)\\3&16-228t^{2}+45t^{4}\\4&16(2-85t^{2}+72t^{4})\\5&64 - 7344 t^2 + 17720 t^4 - 1575 t^6\\6&128 (1- 291 t^2 + 1662 t^4 -576 t^6)\\7&256 - 181056 t^2 + 2199408 t^4 - 1974168 t^6 + 99225 t^8\\8&256 (2 - 3335 t^2 + 80370 t^4 - 155256 t^6 + 28800 t^8)\\9&1024 - 3936000 t^2 + 179222016 t^4 - 669169296 t^6 + 304572636 t^8 -
 9823275 t^{10}\\10&2048 (1 - 8708 t^2 + 722853 t^4 -4861164 t^6 + 4513680 t^8 -518400 t^{10})\\\hline\hline\end{array}\end{align*}\end{small}\end{table}
\eor\end{remark}\begin{remark}Let $ C=\frac{1}{240 \sqrt{5}\pi^{2}}\Gamma \left(\frac{1}{15}\right) \Gamma \left(\frac{2}{15}\right) \Gamma \left(\frac{4}{15}\right) \Gamma \left(\frac{8}{15}\right)$ be the ``Bologna constant'' attributed to Broadhurst \cite{Broadhurst2007,BBBG2008} and Laporta \cite{Laporta2008}. The results  $\IKM(2,3;1)=\frac{\sqrt{15}\pi}{2}C $, $ \IKM(2,3;3)=\frac{\sqrt{15}\pi}{2}\left( \frac{2}{15} \right)^2\big( 13C+\frac{1}{10C} \big)$ and $ \IKM(2,3;5)=\frac{\sqrt{15}\pi}{2}\left( \frac{4}{15} \right)^3\big( 43C+\frac{19}{40C} \big)$ were confirmed by Bai\-ley--Bor\-wein--Broad\-hurst--Glas\-ser \cite[\S5.10]{BBBG2008}. In \cite[\S2]{Zhou2017WEF}, we wrote a short proof for $ \IKM(1,4;1)=\pi^2 C$ based on Wick rotations, which simplified the arguments by  Bloch--Kerr--Vanhove \cite{BlochKerrVanhove2015} and Samart \cite{Samart2016}; we then verified the formulae $ \IKM(1,4;3)=\pi^{2}\left( \frac{2}{15} \right)^2\left( 13C-\frac{1}{10C} \right)$ and $ \IKM(1,4;5)=\pi^{2}\left( \frac{4}{15} \right)^3\left( 43C-\frac{19}{40C} \right)$ using Eichler integrals \cite[\S3]{Zhou2017WEF}. In retrospect, we could have dispensed with the computations of Eichler integrals, in favor of some algebraic manipulations related to Vanhove's operators. Indeed, in \cite[\S2]{Zhou2017BMdet}, we demonstrated that the determinant\begin{align}\det
\begin{pmatrix}\IKM(1,4;1) & \IKM(1,4;3) \\
\IKM(2,3;1) & \IKM(2,3;3) \\
\end{pmatrix}=\frac{2\pi^{3}}{\sqrt{3^35^5}}
\end{align}followed from factorizations of certain Wro\'nskians related to  $ \widetilde L_3$, which would enable us to evaluate $ \IKM(1,4;3)$ algebraically, drawing on the knowledge of the other three matrix elements. The sum rule   \eqref{eq:IKM14n},  provable by the Vanhove procedure outlined above,  now provides us with an algebraic route towards the closed form of $ \IKM(1,4;5)$.       \eor\end{remark}

\section{Hankel--Vanhove mechanism and exceptional sum rules\label{sec:IKM261IKM263}}To prove Theorem~\ref{thm:8Bessel_surprise}, we will need to investigate certain modular forms on the  Chan--Zudilin group $\varGamma_0(6)_{+3}=\langle\varGamma_0(6),\widehat W_3\rangle$  \cite{ChanZudilin2010}, which is \begin{align}
\varGamma_0(6):=\left\{ \left.\begin{pmatrix}a & b \\
c & d \\
\end{pmatrix}\right|a,b,c,d\in\mathbb Z;ad-bc=1;c \equiv0\,(\!\bmod 6)\right\}
\end{align}adjoining an involution  $ \widehat W_3:=\frac{1}{\sqrt{3}}\left(\begin{smallmatrix}3&-2\\6&-3\end{smallmatrix}\right)$. In the meantime, we will revisit our treatment of the 8-Bessel problems in \cite[\S5]{Zhou2017WEF}, with some extensions and simplifications.
\subsection{Chan--Zudilin representations of Bessel moments and their Hankel fusions}
As pointed out by    Chan--Zudilin \cite[(2.2)]{ChanZudilin2010}, the group   $ \varGamma_0(6)_{+3}$ enjoys a Hauptmodul \begin{align}
X_{6,3}(z):={}&\left[\frac{ \eta (2z ) \eta (6 z )}{\eta (z ) \eta (3z)}\right]^{6},\quad z\in\mathfrak H:=\{\tau\in\mathbb C|\I\tau>0\}, \label{eq:X63_defn}
\end{align} expressible  in terms of  the Dedekind eta function  $ \eta(\tau):=e^{\pi i\tau/12}\prod_{n=1}^\infty(1-e^{2\pi in\tau}),\tau\in\mathfrak H$. This Hauptmodul satisfies the following invariance properties:\begin{align}
X_{6,3}(\widehat\gamma z)={}&X_{6,3}(z),&& \forall z\in\mathfrak H, \widehat \gamma\in\varGamma_0(6),\\X_{6,3}(\widehat W_3 z)={}&X_{6,3}(z),&& \forall z\in\mathfrak H,\label{eq:X63W3}
\end{align}where we have set  $ \widehat T z:=\frac{az+b}{cz+d}$ for $ \widehat T=\left(\begin{smallmatrix}a&b\\c&d\end{smallmatrix}\right)$, by convention.

Moreover, in \cite[(2.5)]{ChanZudilin2010}, Chan--Zudilin introduced a notation \begin{align}
Z_{6,3}(z):={}&\frac{[\eta(z)\eta(3z)]^{4}}{[\eta(2z)\eta(6z)]^{2}},\quad z\in\mathfrak H  \label{eq:Z63_defn}
\end{align}      for a  modular form of weight 2 on $\varGamma_0(6)_{+3} $.
It transforms  as follows:\begin{align}
Z_{6,3}(\widehat\gamma z)={}&(cz+d)^{2}Z_{6,3}(z),&& \forall z\in\mathfrak H, \widehat \gamma=\left(\begin{smallmatrix}a&b\\c&d\end{smallmatrix}\right)\in\varGamma_0(6),\\Z_{6,3}(\widehat W_3 z)={}&-3 (2 z-1)^2Z_{6,3}(z),&& \forall z\in\mathfrak H.\label{eq:Z63W3}
\end{align}

The modular form  $ Z_{6,3}(z)$ and the Hauptmodul  $ X_{6,3}(z)$  are both useful in the evaluation of Bessel moments, as shown by the proposition below.\begin{proposition}[Modular parametrization of Bessel moments]We have \begin{align}\begin{split}&
\IvKM(2,3;1|-64X_{6,3}(z)):=\int_0^\infty I_{0}\left(\left[\frac{2 \eta (2z ) \eta (6 z )}{\eta (z ) \eta (3z)}\right]^{3}\frac{t}{i}\right)I_0(t)[K_0(t)]^3t\D t\\={}&\frac{\pi^{2}}{16}Z_{6,3}(z),\label{eq:IvKM_Z63}\end{split}\\\begin{split}&
\IKvM(2,3;1|-64X_{6,3}(z)):=\int_0^\infty K_{0}\left(\left[\frac{2 \eta (2z ) \eta (6 z )}{\eta (z ) \eta (3z)}\right]^{3}\frac{t}{i}\right)[I_0(t)K_0(t)]^2t\D t\\={}&\frac{\pi^{2}}{96}[1-3(2z-1)^{2}]Z_{6,3}(z),\label{eq:IKvM_Z63}\end{split}\\\begin{split}&\IvKM(1,4;1|-64X_{6,3}(z))+4\IKvM(1,4;1|-64X_{6,3}(z))\\={}&\int_0^\infty I_{0}\left(\left[\frac{2 \eta (2z ) \eta (6 z )}{\eta (z ) \eta (3z)}\right]^{3}\frac{t}{i}\right)[K_0(t)]^4t\D t+4\int_0^\infty K_{0}\left(\left[\frac{2 \eta (2z ) \eta (6 z )}{\eta (z ) \eta (3z)}\right]^{3}\frac{t}{i}\right)I_0(t)[K_0(t)]^3t\D t\\={}&\frac{\pi^3}{8i}(2z-1)Z_{6,3}(z),\end{split}
\label{eq:IvKM_IKvM_sum}\end{align}for   $ z=\frac{1}{2}+iy,y\geq\frac{1}{2\sqrt{3}}$, where $ u=-64X_{6,3}(z)\in(0,4]$. For $ u=-64X_{6,3}(z)\in[4,\infty)$, the relation in  \eqref{eq:IKvM_Z63} remains a valid modular parametrization for  the  Bessel moment $\int_0^\infty K_{0}(\sqrt{u}t)[I_0(t)K_0(t)]^2t\D t $, where\begin{align}
\begin{cases}u=-64X_{6,3}(z)\in[4,16], & z=\frac{1}{2}+\frac{i}{2\sqrt{3}}e^{i\varphi},\varphi\in[0,\frac{\pi}{3}], \\
u=-64X_{6,3}(z)\in[16,\infty), & z=\frac{1}{6}(1+e^{i\psi}),\psi\in[\frac{\pi}{3},\pi). \\
\end{cases}
\end{align}\end{proposition}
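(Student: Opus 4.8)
The plan is to realize every left-hand side as a solution of the \emph{homogeneous} equation $\widetilde L_3 y=0$, and then to match it with the indicated modular expression after the Chan--Zudilin substitution $u=-64X_{6,3}(z)$. First I would check that all three integrals solve $\widetilde L_3 y=0$. The non-kernel integrands $I_0(t)[K_0(t)]^3$ and $[I_0(t)K_0(t)]^2$ are products of four Bessel factors, hence are annihilated by $L_5=\mathscr L_{5,5}$; applying $t\widetilde L_3I_0(\sqrt u t)=-\tfrac18 L_5^*\tfrac{I_0(\sqrt u t)}{t}$ (and its $K_0$ analogue) followed by integration by parts in $t$ yields $\widetilde L_3\IvKM(2,3;1|u)=0$ and $\widetilde L_3\IKvM(2,3;1|u)=0$, just as in the homogeneous lines of \eqref{eq:VanhoveLnODE}. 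For the third identity the two inhomogeneities cancel: since $\widetilde L_3\IvKM(1,4;1|u)=-3$ and $\widetilde L_3\IKvM(1,4;1|u)=\tfrac34$, the combination $\IvKM(1,4;1|u)+4\IKvM(1,4;1|u)$ satisfies $\widetilde L_3 y=-3+4\cdot\tfrac34=0$, which is precisely why this linear combination, rather than either summand, is the object being parametrized.

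Next I would show that, under $u=-64X_{6,3}(z)$, the three functions $Z_{6,3}(z)$, $(2z-1)Z_{6,3}(z)$ and $(2z-1)^2Z_{6,3}(z)$, read as functions of $u$, span the solution space of $\widetilde L_3 y=0$. The mechanism is that $\widetilde L_3$, after the change of variable $u=-64X_{6,3}(z)$ and conjugation by the weight-two form $Z_{6,3}$, becomes the symmetric square of the second-order Picard--Fuchs operator uniformized by the Hauptmodul $X_{6,3}$; the solution space of a symmetric square is spanned by $Z$, $\tau Z$ and $\tau^2 Z$, and here the $\widehat W_3$-adapted coordinate $2z-1$, which is odd under the Atkin--Lehner involution since $\widehat W_3$ fixes $z=\tfrac12+\tfrac{i}{2\sqrt3}$, plays the role of $\tau$. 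Concretely I would compute $\tfrac{\D u}{\D z}$ from the Chan--Zudilin relations between $X_{6,3}$ and $Z_{6,3}$, feed it into $\widetilde L_3$ through the chain rule, and verify directly that $\widetilde L_3$ annihilates $Z_{6,3}(z)$ regarded as a function of $u$. The transformation law \eqref{eq:Z63W3} then accounts for the even solution $[1-3(2z-1)^2]Z_{6,3}$ appearing in \eqref{eq:IKvM_Z63} and the odd factor $2z-1$ in \eqref{eq:IvKM_IKvM_sum}.

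With both sides identified as homogeneous solutions, I would fix the scalar constants by matching asymptotics at the cusp. As $z=\tfrac12+iy$ with $y\to\infty$ one has $q=e^{2\pi iz}\to0$, whence $X_{6,3}(z)\sim q\to0$ and $u=-64X_{6,3}(z)\to0^+$; expanding $I_0(\sqrt u t)=\sum_{k\ge0}\tfrac{(ut^2/4)^k}{(k!)^2}$ renders each $I_0$-kernel moment a convergent power series in $u$ with coefficients built from $\IKM(1,3;2k+1)$, while the $K_0$-kernel moments acquire logarithmic terms from $K_0(\sqrt u t)\sim-\tfrac12\log u+\cdots$. On the modular side the eta-quotient $q$-series of $X_{6,3}$ and $Z_{6,3}$ produce the matching expansions, with $2z-1\sim-\tfrac{i}{\pi}\log(u/64)$ supplying the period logarithms. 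Matching the expansions term by term fixes $\tfrac{\pi^2}{16}$, $\tfrac{\pi^2}{96}$ and $\tfrac{\pi^3}{8i}$; I would pay special attention to \eqref{eq:IKvM_Z63}, where the divergence of $\int_0^\infty[I_0(t)K_0(t)]^2t\,\D t$ forces a $(\log u)^2$ contribution that must agree with the double logarithm carried by $(2z-1)^2$.

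Finally, to continue \eqref{eq:IKvM_Z63} to $u\in[4,\infty)$ I would transport the identity along the two fundamental-domain arcs listed in the statement, using that $\IKvM(2,3;1|u)$ stays real-analytic for every $u>0$ (the $K_0$ kernel preserves convergence and creates no threshold singularity at $u=4$), whereas the $I_0$-kernel moments do branch there, so that only the $K_0$ parametrization survives past the threshold. I expect the genuine obstacle to be the second step: confirming that Vanhove's $\widetilde L_3$ is exactly the symmetric square of the Chan--Zudilin Picard--Fuchs operator, with the precise normalization $-64$ and the correct accessory parameter, so that $Z_{6,3}$ pulled back along $u=-64X_{6,3}(z)$ is annihilated. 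Once that conjugacy is in hand the remaining work is bookkeeping with $q$-expansions, but it is there that the modular input and the Feynman-integral differential equation must be forced to coincide.
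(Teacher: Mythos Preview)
Your proposal is correct and follows essentially the same three-step strategy as the paper: verify that each left-hand side lies in $\ker\widetilde L_3$, identify the solution space with $Z_{6,3}(z)\cdot\{1,z,z^2\}$ under $u=-64X_{6,3}(z)$, and then pin down the constants by asymptotics at the cusp. The only noteworthy differences are in execution: the paper outsources your second step to Verrill's theorems rather than redoing the symmetric-square computation, and for \eqref{eq:IKvM_Z63} it invokes the $\widehat W_3$-transformation laws \eqref{eq:X63W3}--\eqref{eq:Z63W3} to cut the three unknowns down to two (the even and odd parts under $z\mapsto 1-z$), then fixes the odd coefficient $k_1=0$ via the special value $u=1$ rather than by matching a higher-order $q$-expansion term; both routes work.
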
\begin{proof}Both \eqref{eq:IvKM_Z63} and \eqref{eq:IKvM_Z63} can be verified in three steps. First, using integration by parts and symmetric powers of Bessel differential operators \cite[Lemma 4.2]{Zhou2017BMdet}, one checks that  $ f(u)=\IvKM(2,3;1|u)$  [or    $ f(u)=\IKvM(2,3;1|u)$]    satisfies a homogeneous differential equation \begin{align} u^2 (u - 4) (u - 16)f'''(u)+6 u (u^2 - 15 u + 32)f''(u)+(7 u^2 - 68 u + 64)f'(u)+(u-4)f(u)=0.\label{eq:L3_ODE}\end{align}Second, one notes that every solution to such a homogeneous differential equation must assume the form  $ f(-64X_{6,3}(z))=Z_{6,3}(z)(c_{0}+c_1z+c_2 z^2)$ for constants $c_0,c_1,c_2\in\mathbb C $ \cite[Theorems 1 and 3]{Verrill1996}.
Third, examining special values (including asymptotic behavior) of the function in question, one  determines the constants $c_0,c_1,c_2$.

Here, the  third step deserves further explanations.

For  \eqref{eq:IvKM_Z63} (see also \cite[(3.1.6)]{Zhou2017WEF}), we have   the following asymptotic behavior \cite[(54)]{BBBG2008}:\begin{align}
\lim_{z\to\frac{1}{2}+i\infty}\IvKM(2,3;1|-64X_{6,3}(z))=\int_0^\infty I_0(t)[K_0(t)]^3t\D t=\frac{\pi^{2}}{16}.
\end{align}Meanwhile, the relation  $ \lim_{z\to\frac{1}{2}+i\infty}Z_{6,3}(z)=1$ follows from \eqref{eq:Z63_defn} and the definition of the Dedekind eta function as an infinite product.
 Thus one  immediately determines $ c_0=\frac{\pi^2}{16},c_1=c_2=0$.

For  \eqref{eq:IKvM_Z63}, we need  two observations: (i)~The function $ \IKvM(2,3;1|-64X_{6,3}(z))$ analytically continues across the point $ z=\frac{1}{2}+\frac{i}{2\sqrt{3}}$, and remains holomorphic in an open neighborhood of the ray $ z=\frac{1}{2}+iy,y>0$; and (ii)~To remain compatible with the transformation laws in  \eqref{eq:X63W3} and \eqref{eq:Z63W3}, we must have \begin{align}
\IKvM(2,3;1|-64X_{6,3}(z)):=\{k_{0}[1-3(2z-1)^{2}]+k_{1}(2z-1)\}Z_{6,3}(z)
\end{align} on the ray $ z=\frac{1}{2}+iy,y>0$. To compute the constants $ k_0,k_1\in\mathbb C$, we rely on  two quick facts: (1)~We have  $ \IKvM(2,3;1|u)=\frac{1}{32}\log^2\frac{4}{u}+O(\log u)$, as $u\to0^+$ \cite[(3.18)--(3.19)]{Zhou2018LaportaSunrise}, so $k_0= \frac{\pi^{2}}{96}$; (2)~We have $ \IKvM\big(2,3;1\big|-64X_{6,3}\big(\frac{1}{2}+\frac{i\sqrt{5}}{2\sqrt{3}}\big)\big)=\IKvM(2,3;1|1)=\IKM(2,3;1)=\IvKM\big(2,3;1\big|-64X_{6,3}\big(\frac{1}{2}+\frac{i\sqrt{5}}{2\sqrt{3}}\big)\big)=\frac{\pi^{2}}{16}Z_{6,3}\big(\frac{1}{2}+\frac{i\sqrt{5}}{2\sqrt{3}}\big)$, as can be inferred from  \cite[Table 1]{Zhou2017WEF} and \eqref{eq:IvKM_Z63}, so $k_1=0 $.

The proof of \eqref{eq:IvKM_IKvM_sum} (which corrects a misprinted sign in \cite[(5.1.33)]{Zhou2017WEF}) is similar to that of  \eqref{eq:IKvM_Z63}. See \cite[Proposition 5.1.4]{Zhou2017WEF} for details.     \end{proof}\begin{corollary}[Analytic continuations of Bessel moments]Let   $ J_0(x):=\frac{2}{\pi}\int_0^{\pi/2}\cos(x\cos\varphi)\D\varphi\equiv I_0(ix),x\in\mathbb R$ and  $ Y_0(x):=-\frac{2}{\pi}\int_0^\infty\cos(x\cosh u)\D u,x\in(0,\infty)$ be Bessel functions of the zeroth order.   For $ z/i>0$, the following identities hold:\begin{align}
\int_0^\infty J_{0}\left(\left[\frac{2 \eta (2z ) \eta (6 z )}{\eta (z ) \eta (3z)}\right]^{3}t\right)I_0(t)[K_0(t)]^3t\D t={}&\frac{\pi^{2}}{16}Z_{6,3}(z),\label{eq:JIKKK_Z63}\\
\int_0^\infty J_{0}\left(\left[\frac{2 \eta (2z ) \eta (6 z )}{\eta (z ) \eta (3z)}\right]^{3}t\right)[I_{0}(t)K_0(t)]^{2}t\D t={}&\frac{\pi z }{4i}Z_{6,3}(z),\label{eq:J_zZ63}\\\int_0^\infty Y_{0}\left(\left[\frac{2 \eta (2z ) \eta (6 z )}{\eta (z ) \eta (3z)}\right]^{3}t\right)[I_{0}(t)K_0(t)]^{2}t\D t={}&\frac{\pi\left( z^2+\frac{1}{6} \right)}{4}Z_{6,3}(z).\label{eq:Y_zzZ63}
\end{align}Furthermore, we have the following relation for $ z/i>0$:\begin{align}\begin{split}
&\int_0^\infty J_{0}\left(\left[\frac{2 \eta (2z ) \eta (6 z )}{\eta (z ) \eta (3z)}\right]^{3}t\right)[K_0(t)]^4t\D t-2\pi\int_0^\infty Y_{0}\left(\left[\frac{2 \eta (2z ) \eta (6 z )}{\eta (z ) \eta (3z)}\right]^{3}t\right)I_0(t)[K_0(t)]^3t\D t\\={}&\frac{\pi^3z}{4i}Z_{6,3}(z).\label{eq:JKKKK_YIKKK}\end{split}
\end{align} \end{corollary}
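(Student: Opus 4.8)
The plan is to read the three modular parametrizations \eqref{eq:IvKM_Z63}, \eqref{eq:IKvM_Z63} and \eqref{eq:IvKM_IKvM_sum} not as identities confined to the fundamental-domain boundary, but as identities between holomorphic functions of $z$, and then to propagate them to the positive imaginary axis by analytic continuation. Writing $w(z):=\left[\frac{2\eta(2z)\eta(6z)}{\eta(z)\eta(3z)}\right]^{3}=8\sqrt{X_{6,3}(z)}$, which is holomorphic and single-valued on $\mathfrak H$, every off-shell moment appearing in the preceding Proposition carries the Bessel argument $w(z)t/i$, so that both sides of \eqref{eq:IvKM_Z63}--\eqref{eq:IvKM_IKvM_sum} are holomorphic in $z$: the right-hand sides are polynomials in $z$ times the modular form $Z_{6,3}(z)$, hence entire on $\mathfrak H$, while the left-hand integrals are holomorphic in $z$ wherever they converge. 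The Proposition supplies equality along the ray $\R z=\frac12$ and its continuing arcs, a set with an accumulation point, so the identity theorem forces the same equalities to persist on the largest connected subdomain of $\mathfrak H$ on which the integrals remain holomorphic, in particular on the ray $z/i>0$.

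On that ray $w(z)$ is real and positive, the argument $w(z)t/i=-iw(z)t$ is purely imaginary, and I would convert the Bessel factors through the connection formulae $I_0(-iwt)=J_0(wt)$ and $K_0(-iwt)=\frac{\pi i}{2}[J_0(wt)+iY_0(wt)]$, the latter being the first Hankel function evaluated off the real axis. Substituting these into \eqref{eq:IvKM_Z63} already yields \eqref{eq:JIKKK_Z63}, since $I_0(-iwt)I_0(t)[K_0(t)]^3=J_0(wt)I_0(t)[K_0(t)]^3$ and $Z_{6,3}(iy)$ is real. For \eqref{eq:J_zZ63} and \eqref{eq:Y_zzZ63} I would feed the $K_0$ formula into \eqref{eq:IKvM_Z63}, so that the continued $\IKvM(2,3;1|{-}64X_{6,3}(z))$ becomes $\frac{\pi i}{2}\int_0^\infty J_0(wt)[I_0(t)K_0(t)]^2t\D t-\frac{\pi}{2}\int_0^\infty Y_0(wt)[I_0(t)K_0(t)]^2t\D t$; because $Z_{6,3}(iy)$ and the two real integrals are all real, separating the real and imaginary parts of the continued identity delivers both formulae at once. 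Identity \eqref{eq:JKKKK_YIKKK} arises the same way from the sum relation \eqref{eq:IvKM_IKvM_sum}: expanding $\IKvM(1,4;1|\cdot)$ via the Hankel formula and taking the real part isolates the stated combination $\int_0^\infty J_0(wt)[K_0(t)]^4t\D t-2\pi\int_0^\infty Y_0(wt)I_0(t)[K_0(t)]^3t\D t$, while the imaginary part reproduces \eqref{eq:JIKKK_Z63} as a consistency check.

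The main obstacle is not the algebra but the legitimacy of the continuation, and it has two parts. First, I must exhibit a connected path in $\mathfrak H$ joining the boundary ray $\R z=\frac12$ to the imaginary axis along which every defining integral converges and depends holomorphically on $z$; since $\sqrt u=w(z)/i$ gives $\R\sqrt u=\I w(z)$, convergence of $\int I_0(\sqrt u t)\cdots[K_0]^3$ forces $\I w$ to stay below the decay rate of the surviving exponential, whereas convergence of the $\IKvM$ integrals forces $\I w\ge0$, the imaginary axis being the borderline oscillatory case. Verifying that the explicit $w(z)$ admits a path threading this thin region ($0\le\I w<2$ near the two terminal curves), and invoking differentiation under the integral sign for holomorphy, is the delicate bookkeeping. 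Second, I must confirm that the branch is the right one: because $\IKvM(2,3;1|u)$ carries a genuine $\log^2 u$ singularity at the cusp $u=0$, its continuation is path-dependent, and one must check that along the chosen path $\arg(w(z)t/i)$ stays within $(-\pi,\tfrac{\pi}{2}]$, so that $K_0$ never crosses its cut and the continuation lands on the first Hankel function $J_0+iY_0$ rather than $J_0-iY_0$. Once this path and branch are pinned down consistently across all three parametrizations, the real/imaginary decompositions above close up, and \eqref{eq:JIKKK_Z63}--\eqref{eq:JKKKK_YIKKK} follow for every $z/i>0$.
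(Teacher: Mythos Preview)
Your proposal is correct and follows essentially the same route as the paper: both arguments hinge on the connection formulae $J_0(x)=I_0(ix)$ and $K_0(y)=\tfrac{\pi i}{2}H_0^{(1)}(iy)$, use them to rewrite the off-shell moments of the preceding Proposition on the imaginary axis, and then split into real and imaginary parts to extract the $J_0$- and $Y_0$-integrals separately. The paper's proof is terse---it simply cites the connection formulae and, for \eqref{eq:JKKKK_YIKKK}, defers the continuation details to \cite[Proposition~5.1.4]{Zhou2017WEF}---whereas you spell out the analytic continuation in $z$, the path-and-branch bookkeeping, and the explicit real/imaginary decomposition; this extra care is warranted and does not diverge from the paper's strategy.
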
\begin{proof}The definitions of $ J_0(x),x\in\mathbb R$ and $ Y_0(x),x\in(0,\infty)$ can be analytically continued to  $ J_0(z),z\in\mathbb C$, $ Y_0(z),z\in\mathbb C\smallsetminus(-\infty,0]$, through which one can define  the Hankel functions of zeroth order $ H_0^{(1)}(z)=J_0(z)+iY_0(z),z\in\mathbb C\smallsetminus(-\infty,0]$ and  $ H_0^{(2)}(z)=J_0(z)-iY_0(z),z\in\mathbb C\smallsetminus(-\infty,0]$.

Thus, the relation  $ J_0(x)= I_0(ix),x\in\mathbb R$ allows us to deduce \eqref{eq:JIKKK_Z63} from  \eqref{eq:IvKM_Z63}. The identity   $ \frac{2}{\pi i}K_0(y)=H_0^{(1)}(iy)=J_0(iy)+iY_0(iy),y>0$ reveals \eqref{eq:J_zZ63} and \eqref{eq:Y_zzZ63}  as natural consequences of \eqref{eq:IKvM_Z63}.

One can check that \eqref{eq:JKKKK_YIKKK} is an analytic continuation of  \eqref{eq:IvKM_IKvM_sum}, as in \cite[Proposition 5.1.4]{Zhou2017WEF}.  \end{proof}As in \cite{Broadhurst2016,Zhou2017WEF,Zhou2018ExpoDESY}, we introduce the following  cusp form of weight 6 and level 6:\begin{align}f_{6,6}(z):=
\frac{ [\eta (2 z) \eta (3 z)]^{9}}{[\eta (z)\eta (6 z)]^{3}}+\frac{ [\eta ( z) \eta (6 z)]^{9}}{[\eta (2z)\eta (3 z)]^{3}}=\frac{[Z_{6,3}(z)]^{2}}{2\pi i}\frac{\D X_{6,3}(z)}{\D z}.
\end{align}This cusp form featured prominently in Broadhurst's conjectures \cite[(142)--(146)]{Broadhurst2016} that related $ \IKM(a,8-a;1),a\in\{1,2,3,4\}$ to special values of the $L$-function: \begin{align}
L(f_{6,6},s):=\frac{1}{\Gamma(s)}\int_0^{\infty} f_{6,6}(iy)(2\pi y)^{s}\frac{\D y}{y}.
\end{align}In the next theorem, we recapitulate
from \cite[\S5]{Zhou2017WEF} our verification of Broadhurst's conjectures, along with some key simplifications.
\begin{theorem}[Broadhurst representations for 8-Bessel problems]\begin{enumerate}[leftmargin=*,  label=\emph{(\alph*)},ref=\emph{(\alph*)},
widest=a, align=left]\item We have {\allowdisplaybreaks\begin{align}
\IKM(4,4;1)={}&L(f_{6,6},3),\label{eq:IKM441_L}\\\IKM(3,5;1)={}&\frac{\pi^2}{4}L(f_{6,6},2),\label{eq:IKM351_L}\\\IKM(2,6;1)={}&\frac{\pi^{4}}{8}L(f_{6,6},1),\label{eq:IKM261_L}\\
\IKM(1,7;1)={}&\frac{\pi^4}{4}L(f_{6,6},2).\label{eq:IKM171_L}
\end{align}}\item We have the following identity:\begin{align}
7\pi^{2}L(f_{6,6},1)={}&36L(f_{6,6},3),\label{eq:ESM_L_value}
\end{align}which entails\begin{align}
14\IKM(2,6;1)=9\pi^{2}\IKM(4,4;1).
\end{align}\end{enumerate} \end{theorem}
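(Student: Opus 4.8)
The plan is to establish part (a) by \emph{Hankel fusion}. Writing $\hat g(k):=\int_0^\infty J_0(kt)g(t)t\,\D t$ for the zeroth-order Hankel transform, Parseval's identity gives $\int_0^\infty g_1(t)g_2(t)t\,\D t=\int_0^\infty\hat g_1(k)\hat g_2(k)k\,\D k$. I would apply this to the four pairings whose products are the required integrands: $[I_0K_0]^2\otimes[I_0K_0]^2$ for $\IKM(4,4;1)$, $I_0K_0^3\otimes[I_0K_0]^2$ for $\IKM(3,5;1)$, $I_0K_0^3\otimes I_0K_0^3$ for $\IKM(2,6;1)$, and $I_0K_0^3\otimes K_0^4$ for $\IKM(1,7;1)$. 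The Corollary supplies the transforms $\widehat{I_0K_0^3}=\frac{\pi^2}{16}Z_{6,3}(z)$ and $\widehat{[I_0K_0]^2}=\frac{\pi z}{4i}Z_{6,3}(z)$ in closed modular form once one sets $k=[2\eta(2z)\eta(6z)/(\eta(z)\eta(3z))]^3$, so that $k^2=64X_{6,3}(z)$ and $k\,\D k=32\frac{\D X_{6,3}}{\D z}\D z$; along $z=iy$ the range $k:0\to\infty$ corresponds to $y:\infty\to0$. Using $[Z_{6,3}]^2\frac{\D X_{6,3}}{\D z}=2\pi i\,f_{6,6}$, each Parseval integral collapses to $\int_0^\infty y^m f_{6,6}(iy)\,\D y=\frac{m!}{(2\pi)^{m+1}}L(f_{6,6},m+1)$ for the appropriate $m\in\{0,1,2\}$, which is exactly \eqref{eq:IKM441_L}--\eqref{eq:IKM171_L}.

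The one delicate fusion is $\IKM(1,7;1)$, since $K_0^4$ has no modular Hankel transform on its own. Here I would invoke \eqref{eq:JKKKK_YIKKK} to write $\widehat{K_0^4}=\frac{\pi^3z}{4i}Z_{6,3}(z)+2\pi\,\check A$, where $\check A(k):=\int_0^\infty Y_0(kt)I_0K_0^3\,t\,\D t$ is the $Y_0$-transform of $A:=I_0K_0^3$. The modular term reproduces $\frac{\pi^4}{4}L(f_{6,6},2)$ exactly, so what remains is to show the cross term $2\pi\int_0^\infty\hat A(k)\check A(k)k\,\D k$ vanishes. This follows because the mixed kernel $\int_0^\infty J_0(ks)Y_0(kt)k\,\D k$ is antisymmetric in $s\leftrightarrow t$ (with no compensating $\delta$-distribution, unlike the $J_0J_0$ closure relation), whereas it is integrated against the symmetric weight $A(s)A(t)$.

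For part (b) I would recast $7\pi^2L(f_{6,6},1)=36L(f_{6,6},3)$ as the period relation $7c_0=72c_2$, with $c_m:=\int_0^\infty y^m f_{6,6}(iy)\,\D y$. The Fricke involution $z\mapsto-1/(6z)$ preserves the imaginary axis; since $c_2\neq0$ forces the Fricke eigenvalue $-1$, the change of variables applied to $\int_0^{i\infty}f_{6,6}(z)(X-z)^4\,\D z$ yields the functional-equation relations $c_0=36c_4$ and $c_1=6c_3$. These never couple $c_0$ to $c_2$; the missing relation comes from the second involution, for which \eqref{eq:X63W3} and \eqref{eq:Z63W3} give $f_{6,6}(\widehat W_3z)=27(2z-1)^6f_{6,6}(z)$. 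Inserting this into the Eichler--Shimura period polynomial, while tracking the contours between the four cusps $\{\infty,0,\tfrac12,\tfrac13\}$ that $\widehat W_3$ and $W_2:=W_6\widehat W_3$ permute, produces the genuinely new relation within the even periods, $7c_0=72c_2$. The stated consequence $14\IKM(2,6;1)=9\pi^2\IKM(4,4;1)$ is then immediate from \eqref{eq:IKM261_L} and \eqref{eq:IKM441_L}.

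The main obstacle is precisely this last relation. Eichler--Shimura guarantees a priori that $L(f_{6,6},1)/L(f_{6,6},3)$ is a rational multiple of $\pi^{-2}$, both being odd critical values tied to the same period; but the two Atkin--Lehner involutions act diagonally on the geodesics they fix, and so, by themselves, only reproduce the functional equation $s\leftrightarrow6-s$. Extracting the cross-relation between the non-central values $L(f_{6,6},1)$ and $L(f_{6,6},3)$ --- equivalently, pinning down the exact constant $72/7$ --- requires the full cocycle relations for $\varGamma_0(6)_{+3}$, including the contribution of the elliptic fixed point $z_*=\tfrac12+\tfrac{i}{2\sqrt3}$ of $\widehat W_3$ and the twisted periods attached to the cusps $\tfrac12$ and $\tfrac13$.
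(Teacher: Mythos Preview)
Your treatment of part~(a) is correct and coincides with the paper's argument: Parseval for Hankel transforms applied to \eqref{eq:JIKKK_Z63} and \eqref{eq:J_zZ63} yields \eqref{eq:IKM441_L}--\eqref{eq:IKM261_L}, and \eqref{eq:IKM171_L} follows from \eqref{eq:JKKKK_YIKKK} together with the vanishing of $\int_0^\infty \hat A(k)\check A(k)\,k\,\D k$ for $A=I_0[K_0]^3$, which the paper records as the Hilbert cancelation formula \eqref{eq:Hilb_canc}.

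Part~(b), however, has a genuine gap --- one you yourself flag in the final paragraph. Your Fricke analysis is right ($c_0=36c_4$, $c_1=6c_3$, and $c_2\neq0\Rightarrow\epsilon=-1$), as is the transformation law $f_{6,6}(\widehat W_3 z)=27(2z-1)^6 f_{6,6}(z)$. But $\widehat W_3$ does not preserve the imaginary axis, so the Eichler integrals it produces live over a different geodesic and involve the periods at the cusps $\tfrac12,\tfrac13$ rather than $c_0,\dots,c_4$ directly. Closing the system to pin down the constant $72/7$ would require an explicit modular-symbol computation for $\varGamma_0(6)_{+3}$, which you do not carry out; the sentence ``produces the genuinely new relation within the even periods, $7c_0=72c_2$'' is an assertion, not a derivation.

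The paper's route avoids Eichler--Shimura entirely and stays inside the Hankel machinery of part~(a). It polarizes the Hilbert cancelation formula,
\[
\int_0^\infty\bigl[\hat F(k)\,\check G(k)+\hat G(k)\,\check F(k)\bigr]\,k\,\D k=0,
\]
with $F=I_0[K_0]^3$ and $G=[I_0K_0]^2$. All four transforms are already known from \eqref{eq:JIKKK_Z63}--\eqref{eq:JKKKK_YIKKK}: $\check G$ brings in the factor $z^2+\tfrac16$ via \eqref{eq:Y_zzZ63}, while $\check F$, expressed through \eqref{eq:JKKKK_YIKKK}, contributes a term whose Parseval fusion against $\hat G$ is $\tfrac{1}{2\pi}\IKM(2,6;1)$. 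Feeding in \eqref{eq:IKM261_L} collapses everything to $\int_0^{i\infty} f_{6,6}(z)(7+72z^2)\,\D z=0$, which is exactly $7\pi^2 L(f_{6,6},1)=36L(f_{6,6},3)$. The period relation thus falls out of the same real-analytic apparatus, with no cocycle calculation needed.
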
\begin{proof} \begin{enumerate}[leftmargin=*,  label={(\alph*)},ref=\emph{(\alph*)},
widest=a, align=left]\item As in \cite{Zhou2017WEF}, we refer to the Parseval--Plancherel identity for Hankel transforms, namely\begin{align}
\int_0^\infty \left[ \int_0^\infty J_0(xt) F(t)t\D t \right]\left[\int_0^\infty J_0(x\tau) G(\tau)\tau\D \tau\right]x\D x
= \int_0^\infty F(t)G(t)t\D t
\end{align}as ``Hankel fusion''.  Applying  Hankel fusions to \eqref{eq:JIKKK_Z63} and \eqref{eq:J_zZ63}, one can  verify \eqref{eq:IKM441_L}--\eqref{eq:IKM261_L} (cf.~\cite[(143)--(145)]{Broadhurst2016}, \cite[(1.2.7)--(1.2.9)]{Zhou2017WEF}, \cite[(23)--(25)]{Zhou2018ExpoDESY}).

Moreover, applying the Hilbert cancelation formula  \cite[(4.2.19)]{Zhou2017WEF} \begin{align}\int_0^\infty \left[ \int_0^\infty J_0(xt) F(t)t\D t \right]\left[\int_0^\infty Y_0(x\tau) F(\tau)\tau\D \tau\right]x\D x
=0\label{eq:Hilb_canc}
\end{align}to $ F(t)=I_0(t)[K_{0}(t)]^{3}$,   one can deduce \eqref{eq:IKM171_L} (cf.~\cite[(146)]{Broadhurst2016}, \cite[(1.2.8)]{Zhou2017WEF}, \cite[(24)]{Zhou2018ExpoDESY}) from   \eqref{eq:JIKKK_Z63} and \eqref{eq:JKKKK_YIKKK}. (See \cite[Lemma 4.2.4]{Zhou2017WEF} for the connection between \eqref{eq:Hilb_canc} and Hilbert transforms.) A comparison between  \eqref{eq:IKM351_L} and  \eqref{eq:IKM171_L} then leads us to a sum rule $ \IKM(1,7;1)=\pi^2\IKM(3,5;1)$ \cite[(148)]{Broadhurst2016}, which can also be verified by real-analytic  properties of Hilbert transforms \cite[Theorem 3.3]{HB1}, without invoking special $L$-values.
\item To begin, we note that the Hilbert cancelation formula can be extended to\begin{align}\begin{split}&
\int_0^\infty \left[ \int_0^\infty J_0(xt) F(t)t\D t \right]\left[\int_0^\infty Y_0(x\tau) G(\tau)\tau\D \tau\right]x\D x\\{}&
+\int_0^\infty \left[ \int_0^\infty J_0(xt) G(t)t\D t \right]\left[\int_0^\infty Y_0(x\tau)F(\tau)\tau\D \tau\right]x\D x=0,\end{split}
\end{align}for suitably regular $F  $ and $G$. Setting $ F(t)=I_0(t)[K_{0}(t)]^{3}$ and $ G(t)=[I_0(t)K_{0}(t)]^{2}$ in the equation above, while referring back to \eqref{eq:JIKKK_Z63}--\eqref{eq:JKKKK_YIKKK}, we obtain \begin{align}
\frac{\pi^{4}i}{6}\int_0^{i\infty}f_{6,6}(z)(1+18z^2)\D z-\frac{1}{2\pi}\IKM(2,6;1)=0.
\end{align}Using \eqref{eq:IKM261_L}, we can further  deduce that\begin{align}
\int_0^{i\infty}f_{6,6}(z)(7+72z^2)\D z=0,
\end{align}which is our goal. \qedhere\end{enumerate}\end{proof}
\begin{table}[t]\caption{A partial list of Vanhove operators $ \widetilde L_n$ for $u<0$, reformulated from   \cite[Table 1]{Vanhove2014Survey} so as to highlight the parity of each individual operator\label{tab:Vanhove_parity}}
\begin{small}\begin{align*}\begin{array}{c|l}\hline\hline n&\vphantom{\frac\int1}\widetilde{L}_n\\\hline 1&\vphantom{\frac\int1} \sqrt{u(u-4)}D^1\big[\sqrt{u(u-4)}D^0\big]\\2&D^1[u(u-1)(u-9)D^1]+(u-3)D^0\\3&D^1\big\{\sqrt{u^2(u-4)(u-16)}D^1\big[\sqrt{u^2(u-4)(u-16)}D^1\big]\big\}+\sqrt{u(u-8)}D^1\big[\sqrt{u(u-8)}D^0\big]\\4&D^2[u^{2}(u-1)(u-9)(u-25)D^2]+D^{1}[u (5 u^2-98 u+285)D^1]+(u-5)D^0\\\multirow{2}{*}{5}&D^2\big\{\sqrt{u^3(u-4)(u-16)(u-36)}D^1\big[\sqrt{u^3(u-4)(u-16)(u-36)}D^2\big]\big\}\\&+D^1\big\{\sqrt{u^2(5 u^2-168 u+1020)}D^1\big[\sqrt{u^2(5 u^2-168 u+1020)}D^1\big]\big\}+\sqrt{u(u-12)}D^1\big[\sqrt{u(u-12)}D^0\big]\\\hline\hline\end{array}\end{align*}\end{small}
\end{table}\begin{remark}Unlike our previous formulation of \cite[Theorems 5.2.1 and 5.2.2]{Zhou2017WEF},  the proof above requires no contour deformations.  Instead, it relies only on the properties of Hilbert transforms, and is essentially real-analytic. Thus, contrary to our  statement in the closing paragraph of \cite{HB1}, the sum rule  $ 14\IKM(2,6;1)=9\pi^{2}\IKM(4,4;1)$ is \textit{not} beyond the reach of Hilbert transforms.\eor\end{remark}
\begin{remark}
From the work of Yun \cite[Theorem 1.1.5]{Yun2015}, we know that  the Fourier coefficients $ a_p$ (for $p$ prime) in the weight-6 modular form $ f_{6,6}(z)=\sum_{m=1}^\infty a_me^{2\pi imz}$ encode the information for the $ 8$th symmetric power $ \Kl_{2}^{8}=\Sym^8(\Kl_2)$ of the Kloosterman sum $ \Kl_2(\mathbb F_{p},a)=\sum_{x\in\mathbb F_p^\times}e^{\frac{2\pi i}{p}\left(x+\frac{a}{x}\right)}$, averaged over $ a\in \mathbb F_p^\times$. Treating $  \Kl_2(\mathbb F_{p},a)$ as a finite-field analog\footnote{Prior to Broadhurst's study \cite{Broadhurst2016}, the idea of discretizing Bessel moments via Kloosterman sums appeared in Noam D. Elkies' post on MathOverflow \cite{Elkies2015} about the  integral $ \int_0^\infty x[J_0(x)]^5\D x$. For the connections between Elkies' question to quantum field theory, see \cite[Theorem 2.2.2]{Zhou2017WEF} and \cite[Theorem 4.1]{Zhou2017PlanarWalks}.} of the  Bessel function $ J_0(\sqrt{t}):=\frac{1}{2\pi i}\oint_{|z|=1}e^{\frac{i}{2}\left(z+\frac{t}{z_{\vphantom1}}\right)}\frac{\D z}{z}, t\in\mathbb C$, Broadhurst \cite{Broadhurst2016} and Broadhurst--Roberts \cite{BroadhurstRoberts2019} studied  $ \mathbb F_p^\times$-averages of   $ n$-th symmetric powers $ \Kl_{2}^{n}=\Sym^n(\Kl_2)$  as $p$-adic Bessel moments, leading to various conjectural evaluations of Feynman diagrams via $L$-functions for Kloosterman moments. Broadhurst and Roberts conjectured that these  $L$-functions satisfy a general  functional equation, which has been recently verified by Fres\'an--Sabbah--Yu \cite[Theorems 1.2 and 1.3]{FresanSabbahYu2018}.\eor\end{remark}
\subsection{Vanhove reflections and modular cancelation formulae}As seen from  Table~\ref{tab:Vanhove_parity}, the Vanhove operator $ \widetilde L_3^{\phantom1}=-\widetilde L_3^*$ is skew-symmetric (see \cite[Proposition 2.3]{Zhou2020BRquad} for a proof of the generic parity relation $ \widetilde L_m^{\phantom1}=(-1)^m \widetilde L_m^*$). If we define the commutator as $ [A,B]=AB-BA$, then we have\begin{align}\begin{split}&\left[\widetilde L_3,\frac{3\log(-u)-4\log(4-u)+\log(16-u)}{192}D^0\right]\\={}&3(uD^2+D^1)+\left[\frac{2}{(u-4)^2}+\frac{1}{3 (u-4)}+\frac{8}{(u-16)^2}+\frac{2}{3 (u-16)}\right]D^0.
\end{split}\label{eq:V_refl0}
\end{align}Now suppose that $ f,g\in\ker\widetilde L_3$ and define $ \langle f,g\rangle:=\int_{-\infty}^0 f(u)g(u)\D u$, then we can evaluate the integral\begin{align}
3\langle f,(uD^2+D^1)g\rangle+\int_{-\infty}^0 f(u)g(u)\left[\frac{2}{(u-4)^2}+\frac{1}{3 (u-4)}+\frac{8}{(u-16)^2}+\frac{2}{3 (u-16)}\right]\D u\label{eq:f_Dg}
\end{align}by collecting all  the boundary contributions from integration by parts. We will refer to this trick as a Vanhove reflection on the pair  $ f,g\in\ker\widetilde L_3$.

In what follows, we need some asymptotic  analyses in the $ u\to0^-$  and the $u\to-\infty $ regimes (Lemma \ref{lm:asympt0inf}), before fully elucidating the boundary contributions in Vanhove reflections (Theorem \ref{thm:ab_sum}).\begin{lemma}[Asymptotic behavior of certain off-shell Bessel moments]\label{lm:asympt0inf}The following relations hold true:\begin{align}
\IvKM(2,3;1|u)={}&\begin{cases}\frac{\pi^{2}}{16}\left[1 + \frac{u}{16} +O(u^{2})\right], & u\to0, \\[3pt]
-\frac{3\log^2\left( -\frac{1}{u} \right)}{4u}\left[1+O\left( \frac{1}{u} \right) \right], & u\to-\infty,\ \\
\end{cases}\label{eq:IvKM231_asympt}\\\IvKM(3,2;1|u)={}&\begin{cases}-\frac{\log\left( -\frac{u}{64} \right)}{8}\big(1+ \frac{u}{16}\big)+O(u), & u\to0^-, \\[3pt]
\frac{\log\left( -\frac{1}{u} \right)}{u}\left[1+O\left( \frac{1}{u} \right) \right], & u\to-\infty. \\
\end{cases}\label{eq:IvKM321_asympt}
\end{align}As a result, for \begin{align}
\ell(u):={}&\frac{3\log(-u)-4\log(4-u)+\log(16-u)}{192},\\\check Df(u):={}&\sqrt{u^2(u-4)(u-16)}D^{1}f(u),\end{align}we have asymptotic expansions {\allowdisplaybreaks\begin{align}
\check D^2\big[\ell(u)\IvKM(2,3;1|u)\big]={}&\begin{cases}O(u\log (-u)), & u\to0^{-}, \\
o(1), & u\to-\infty, \\
\end{cases}\label{eq:D2L231}\\
\check D\big[\ell(u)\IvKM(2,3;1|u)\big]
={}&\begin{cases}-\frac{\pi^{2}}{128}+O(u\log (-u)), & u\to0^{-}, \\
o(1), & u\to-\infty, \\
\end{cases}\label{eq:D1L231}\\\check D^2\IvKM(2,3;1|u)={}&\begin{cases}O(u), & u\to0, \\
o(1), & u\to-\infty, \\
\end{cases}\label{eq:D2_231}\\\check D\IvKM(2,3;1|u)={}&\begin{cases}O(u), & u\to0, \\
o(1), & u\to-\infty, \\
\end{cases}\label{eq:D1_231}
\\\check D^2\big[\ell(u)\IvKM(3,2;1|u)\big]={}&\begin{cases}-\frac{1}{4}+O(u\log^2(-u)), & u\to0^{-}, \\
o(1), & u\to-\infty, \\
\end{cases}\label{eq:D2L321}\\
\check D\big[\ell(u)\IvKM(3,2;1|u)\big]
={}&\begin{cases}\frac{1}{96} \log \left(-\frac{u^3}{2048}\right)+O(u\log^2(-u)), & u\to0^{-}, \\
o(1), & u\to-\infty, \\
\end{cases}\label{eq:D1L321}\\\check D^2\IvKM(3,2;1|u)={}&\begin{cases}O(u\log (-u)), & u\to0^{-}, \\
o(1), & u\to-\infty, \\
\end{cases}\label{eq:D2_321}\\\check D\IvKM(3,2;1|u)={}&\begin{cases}1+O(u\log (-u)), & u\to0^{-}, \\
o(1), & u\to-\infty, \\
\end{cases}\label{eq:D1_321}\end{align}}\end{lemma}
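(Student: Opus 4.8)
The plan is to treat both off-shell moments as solutions of Vanhove's third-order equation \eqref{eq:L3_ODE} and to read their behavior off the local structure of that equation at its two relevant regular singular points $u=0$ and $u=\infty$, fixing the undetermined constants from the integral representations. By the same integration-by-parts/symmetric-power argument that produced $\widetilde{L}_3\IvKM(2,3;1|u)=0$ in the preceding proposition (the Borwein--Salvy operator $L_5$ annihilates any product of four Bessel factors, here $[I_0(t)]^2[K_0(t)]^2$), one checks that $f(u)=\IvKM(3,2;1|u)$ also lies in $\ker\widetilde{L}_3$ on $(-\infty,0)$, the oscillatory kernel $I_0(\sqrt u\,t)=J_0(\sqrt{-u}\,t)$ ensuring convergence and the vanishing of boundary terms. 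Once the expansions \eqref{eq:IvKM231_asympt}--\eqref{eq:IvKM321_asympt} are in hand, the estimates \eqref{eq:D2L231}--\eqref{eq:D1_321} come out mechanically: substitute the expansion, differentiate, multiply by $\sqrt{u^2(u-4)(u-16)}$ (which behaves like $-8u$ as $u\to0^-$ and like $u^2$ as $u\to-\infty$), retain the leading orders, and iterate once for $\check D^2$.

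For the regime $u\to0$ I would first compute the indicial equation of \eqref{eq:L3_ODE} at $u=0$: its leading balance $64\rho(\rho-1)(\rho-2)+192\rho(\rho-1)+64\rho=64\rho^3$ has $\rho=0$ as a triple root, so $\ker\widetilde{L}_3$ is spanned near $0$ by solutions with prefactors $1,\log(-u),\log^2(-u)$. The moment $\IvKM(2,3;1|u)$ is the holomorphic branch, its integral being manifestly analytic near $0$; with $a_0=\IvKM(2,3;1|0)=\pi^2/16$ supplied by the proposition, the lowest-order recursion of \eqref{eq:L3_ODE}, namely $64a_1-4a_0=0$, forces $a_1=a_0/16$, which is exactly \eqref{eq:IvKM231_asympt}. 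The moment $\IvKM(3,2;1|u)$ is a single-logarithm branch; to get its head $-\tfrac18\log(-u/64)$ I would set $u=-\mu^2$, subtract the large-$t$ tail $\tfrac1{4t}$ of $[I_0(t)K_0(t)]^2t$, and use $\int_\mu^\infty J_0(s)s^{-1}\D s=-\log(\mu/2)-\gamma+O(\mu^2)$; the logarithmic coefficient $-\tfrac18$ is immediate, while the constant $\tfrac{3\log 2}{4}$ is pinned either by the remaining finite part or, more safely, by matching against the two-logarithm companion $\IKvM(2,3;1|u)=\tfrac1{32}\log^2\tfrac4u+O(\log u)$ through the connection coefficients of $\widetilde{L}_3$.

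For the regime $u\to-\infty$ I would pass to the singular point at infinity: the indicial polynomial of \eqref{eq:L3_ODE} there is $\rho(\rho-1)(\rho-2)+6\rho(\rho-1)+7\rho+1=(\rho+1)^3$, whose triple root $\rho=-1$ dictates the basis $u^{-1},\,u^{-1}\log(-u),\,u^{-1}\log^2(-u)$, matching the shapes in \eqref{eq:IvKM231_asympt}--\eqref{eq:IvKM321_asympt}. To fix the coefficients I would run a Mellin--Barnes analysis of the Hankel transforms $\int_0^\infty J_0(\mu t)\phi(t)\D t$ with $\mu=\sqrt{-u}$ and $\int_0^\infty J_0(s)s^{z-1}\D s=2^{z-1}\Gamma(z/2)/\Gamma(1-z/2)$: since $K_0(t)\sim\log(2/t)-\gamma$, the endpoint behavior $\phi(t)\sim t\,(\log(1/t))^{k}$ at $t=0$ ($k=3$ for $(2,3)$, $k=2$ for $(3,2)$) contributes an order-$(k+1)$ pole of the Mellin transform precisely at $z=2$, where the factor $1/\Gamma(1-z/2)$ carries a simple zero; the cancellation leaves an order-$k$ pole, whose residue yields the leading $\mu^{-2}(\log\mu)^{k-1}$, i.e. $-\tfrac{3\log^2(-1/u)}{4u}$ and $\tfrac{\log(-1/u)}{u}$ respectively.

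Finally, feeding these expansions into $\check D$ and $\check D^2$ and collecting orders delivers \eqref{eq:D2L231}--\eqref{eq:D1_321}; as a representative check, near $u\to0^-$ one has $\frac{d}{du}[\ell(u)\IvKM(2,3;1|u)]\sim\frac{\pi^2}{16}\cdot\frac1{64u}$, whence $\check D[\ell(u)\IvKM(2,3;1|u)]\sim(-8u)\cdot\frac{\pi^2}{1024u}=-\frac{\pi^2}{128}$, and a further application of $\check D$ lands in $O(u\log(-u))$. I expect the main obstacle to be not the method but the precise constants: the numbers $-\tfrac14$ and $\tfrac1{96}\log(-u^3/2048)$ in \eqref{eq:D2L321}--\eqref{eq:D1L321} are inherited directly from the head $-\tfrac18\log(-u/64)$ of $\IvKM(3,2;1|u)$ (its coefficient $-\tfrac18$ producing the $-\tfrac14$, its constant $\tfrac{3\log2}{4}$ the $\log2$-part), so the bookkeeping must track every $\log 2$ and ensure the Euler--Mascheroni contributions cancel. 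That is where I would be most careful, leaning on the companion expansion of $\IKvM(2,3;1|u)$ and the connection data of $\widetilde{L}_3$ rather than evaluating the regularized finite part by hand.
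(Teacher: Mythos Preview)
Your approach is correct but genuinely different from the paper's. The paper does not use Frobenius analysis or Mellin--Barnes at all; instead it exploits the modular parametrization already set up in the preceding proposition. For $u\to0$, the paper reads off the expansions directly from the $q$-expansions of $Z_{6,3}(z)$ and $X_{6,3}(z)$ via the identities $\IvKM(2,3;1|-64X_{6,3}(z))=\frac{\pi^2}{16}Z_{6,3}(z)$ and $\IvKM(3,2;1|-64X_{6,3}(z))=\frac{\pi z}{4i}Z_{6,3}(z)$, together with $q=e^{2\pi iz}=-u/64+O(u^2)$. For $u\to-\infty$, the paper invokes the Chan--Zudilin involution $\widehat W_6\colon z\mapsto-1/(6z)$, under which $X_{6,3}\mapsto 1/(64X_{6,3})$ and $Z_{6,3}(\widehat W_6 z)=-48z^2Z_{6,3}(z)X_{6,3}(z)$; this converts the $u\to-\infty$ regime back into a $q$-expansion near the cusp, so the leading coefficients $-\tfrac34$ and $1$ drop out with no integral analysis whatsoever.

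The trade-off is clear: the paper's route is shorter and delivers every constant (including the $\log 2$ in the head of $\IvKM(3,2;1|u)$, which comes from $q\approx -u/64$) with no risk of Euler--Mascheroni bookkeeping, but it presupposes the modular machinery of the Hauptmodul and the Fricke-type involution. Your route is self-contained and classical---indicial exponents plus endpoint asymptotics---and your Mellin--Barnes sketch does yield the correct leading coefficients (the simple zero of $1/\Gamma(1-z/2)$ at $z=2$ has slope $-1$, producing exactly $-\tfrac{3}{4u}\log^2(-1/u)$ and $\tfrac{1}{u}\log(-1/u)$). The one place you flag as delicate, the constant $\tfrac{3\log2}{4}$ in $\IvKM(3,2;1|u)$ near $u=0^-$, is precisely where the modular parametrization has the edge: there the constant is immediate from $z\approx\frac{1}{2\pi i}\log(-u/64)$, whereas your proposed fallback via connection coefficients of $\widetilde L_3$ would itself require nontrivial global information.
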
\begin{proof}As $z\to i\infty $, we have $ u=-64X_{6,3}(z)\to0$ and [cf.~\eqref{eq:IvKM_Z63} and \eqref{eq:JIKKK_Z63}] \begin{align}
\IvKM(2,3;1|-64X_{6,3}(z))=\frac{\pi^{2}}{16}[1-4q+O(q^2)]
\end{align} for $ q=e^{2\pi iz }\to0$. Meanwhile, we have the $q$-expansion $ u=-64q+O(q^{2})$. This confirms the stated behavior of $\IvKM(2,3;1|u),u\to0$.

Let $ \widehat W_6:=\frac{1}{\sqrt{6}}\left(\begin{smallmatrix*}[r]0&-1\\6&0\end{smallmatrix*}\right)$ be the Chan--Zudilin involution \cite[\S2]{ChanZudilin2010} that sends $z$ to $ -\frac{1}{6z}$. Using the modular transformation $ \eta(-1/\tau)=\sqrt{\tau/i}\eta(\tau)$, one can verify that the modular function $ X_{6,3}(z)$ and the modular form $Z_{6,3}(z) $ satisfy the following transformation laws:
\begin{align}
X_{6,3}(\widehat W_6z)={}&\frac{1}{64X_{6,3}(z)},
\\Z_{6,3}(\widehat W_6z)={}&-48z^2Z_{6,3}(z)X_{6,3}(z),\end{align}for arbitrary $ z\in\mathfrak H$. Thus, we can perform  $q$-expansion on  $
\IvKM(2,3;1|-1/X_{6,3}(z))={}-3\pi ^{2}z^2Z_{6,3}(z)X_{6,3}(z)$, $z/i>0$ to deduce the leading order asymptotics of  $\IvKM(2,3;1|u),u\to-\infty$.

To verify \eqref{eq:IvKM321_asympt},    check the  $q$-expansions of  \eqref{eq:IvKM231_asympt} and $ \IvKM(3,2;1|-1/X_{6,3}(z))=\frac{2\pi z}{i}Z_{6,3}(z)X_{6,3}(z)$, $ z/i>0$, while exploiting the relation $ e^{2\pi iz}=-\frac{u}{64}+O(u^2)$ in the $ u\to0^-$ regime.

One can verify \eqref{eq:D2L231}--\eqref{eq:D1_321} through straightforward computations.   \end{proof}
\begin{theorem}[Exceptional sum rules via Vanhove reflections]\label{thm:ab_sum}If we define \begin{align}\varphi_{6,6}(z):={}&
f_{6,6}(z)\left\{\frac{2}{[-64X_{6,3}(z)-4]^2}+\frac{1}{3[-64X_{6,3}(z)-4]}\right\},\\\chi_{6,6}(z):={}&f_{6,6}(z)\left\{\frac{8}{[-64X_{6,3}(z)-16]^2} +\frac{2}{3[-64X_{6,3}(z)-16]}\right\},
\end{align}then we have \begin{align}
\IKM(2,6;3)={}&-\frac{\pi^5}{3i}\int_0^{i\infty}[\varphi_{6,6}(z)+\chi_{6,6}(z)]\D z=\frac{\IKM(2,6;1)}{72},\label{eq:IKM263_mod}\\-\frac{\pi^2}{192}+\IKM(3,5;3)={}&\frac{4\pi^{4}}{3}\int_0^{i\infty}[\varphi_{6,6}(z)+\chi_{6,6}(z)]z\D z=\frac{\IKM(3,5;1)}{72}+\frac{\pi^{2}}{1728},\label{eq:IKM353_mod}\\\frac{7\log2}{144}+\IKMh(4,4;3)={}&\frac{16\pi^{3}}{3}\int_0^{i\infty}[\varphi_{6,6}(z)+\chi_{6,6}(z)]z^{2}\D z=\frac{\IKM(4,4;1)}{72},\label{eq:IKM*443_mod}
\end{align} leading to \begin{align}
\dim_{\mathbb Q}\Span_{\mathbb Q}\{\IKM(2,6,2k+1)|k\in\mathbb Z_{\geq0}\}\leq2.
\end{align}\end{theorem}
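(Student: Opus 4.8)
The plan is to run the Vanhove reflection \eqref{eq:f_Dg} on three pairs $(f,g)$ drawn from the two modularly parametrized solutions of $\widetilde L_3$. Continuing \eqref{eq:IvKM_Z63} and \eqref{eq:J_zZ63} analytically to $u\in(-\infty,0)$, where $I_0(\sqrt u\,t)=J_0(\sqrt{-u}\,t)$, both $\IvKM(2,3;1|u)$ and $\IvKM(3,2;1|u)$ lie in $\ker\widetilde L_3$ on the negative axis, carrying the modular expressions $\frac{\pi^2}{16}Z_{6,3}(z)$ and $\frac{\pi z}{4i}Z_{6,3}(z)$ under $u=-64X_{6,3}(z)$; the latter is admissible precisely because $z\,Z_{6,3}(z)$ is again a solution of the homogeneous equation \eqref{eq:L3_ODE}. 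I would take $(f,g)$ to be $(\IvKM(2,3;1|u),\IvKM(2,3;1|u))$, $(\IvKM(2,3;1|u),\IvKM(3,2;1|u))$ and $(\IvKM(3,2;1|u),\IvKM(3,2;1|u))$; the products $fg$ then carry $z^0$, $z^1$ and $z^2$ times $[Z_{6,3}(z)]^2$, matching the three weights in \eqref{eq:IKM263_mod}--\eqref{eq:IKM*443_mod}.

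First I would set up the master identity. Because $\widetilde L_3^{\phantom1}=-\widetilde L_3^*$ and $\widetilde L_3 g=0$, integrating $\langle f,\widetilde L_3(\ell g)\rangle$ by parts annihilates the bulk and leaves only the endpoint concomitant; by the commutator \eqref{eq:V_refl0} its left-hand side is exactly the integral \eqref{eq:f_Dg}. Hence, for each pair,
\[ 3\langle f,(uD^2+D^1)g\rangle+\int_{-\infty}^0 f(u)g(u)\left[\frac{2}{(u-4)^2}+\frac{1}{3(u-4)}+\frac{8}{(u-16)^2}+\frac{2}{3(u-16)}\right]\D u=B(f,g), \]
where $B(f,g)$ gathers the boundary data at $u\to0^-$ and $u\to-\infty$.

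Next I would evaluate the three ingredients. The clean identity $(uD^2+D^1)I_0(\sqrt u\,t)=\frac{t^2}{4}I_0(\sqrt u\,t)$ shows that $(uD^2+D^1)$ raises the Bessel weight by $t^2$, so the Hankel fusion (Parseval--Plancherel) of $f$ against $(uD^2+D^1)g$ converts $3\langle f,(uD^2+D^1)g\rangle$ into $\frac32\IKM(2,6;3)$, $\frac32\IKM(3,5;3)$ and $\frac32\IKM(4,4;3)$ respectively. For the rational-function integral I would substitute $u=-64X_{6,3}(z)$ and use $[Z_{6,3}(z)]^2\frac{\D X_{6,3}(z)}{\D z}=2\pi i\,f_{6,6}(z)$ together with the definitions of $\varphi_{6,6},\chi_{6,6}$ to turn it into the displayed multiples of $\int_0^{i\infty}(\varphi_{6,6}+\chi_{6,6})z^{j}\D z$. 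The concomitant $B(f,g)$ is exactly what the expansions \eqref{eq:D2L231}--\eqref{eq:D1_321} of Lemma~\ref{lm:asympt0inf} are designed to feed: the analyticity of $\IvKM(2,3;1|u)$ at $u=0$ makes $B$ vanish for the first pair, whereas the simple and double logarithmic singularities of $\IvKM(3,2;1|u)$ there generate the constants $-\frac{\pi^2}{192}$ and $\frac{7\log2}{144}$ (the $\log 2$ descending from $\log 64=6\log 2$) for the second and third pairs. For the $(3,2)\times(3,2)$ pairing the bulk $\IKM(4,4;3)$ is only logarithmically divergent, and I would absorb its $\frac{1}{4t^2}$ tail into the honorary moment $\IKMh(4,4;3)$, whose subtraction is matched by the divergent part of $B$. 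Assembling the three pieces yields the first equality of each of \eqref{eq:IKM263_mod}--\eqref{eq:IKM*443_mod}.

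Finally, for the second equality in each display I would observe that the very same Parseval device applied at weight $t^1$ writes $\IKM(2,6;1),\IKM(3,5;1),\IKM(4,4;1)$ as $\int_0^{i\infty}[\tfrac{\pi^2}{16}Z_{6,3}]^2 x\,\D x$-type fusions, i.e.\ as the critical values already recorded in \eqref{eq:IKM441_L}--\eqref{eq:IKM171_L}. Each second equality then reduces to a purely modular statement of the form $\int_0^{i\infty}(\varphi_{6,6}+\chi_{6,6})z^{j}\D z=c_j\int_0^{i\infty}f_{6,6}(z)z^{j}\D z+(\text{explicit rational}\times\pi\text{-power})$, which I would prove by integrating the $X_{6,3}$-rational factor against $f_{6,6}$ through its residue structure at the singular values $u=4,16$ and identifying the resulting periods with $L(f_{6,6},j+1)$; the common factor $72$ across $j=0$ and $j=2$ is consistent with, and checked against, the $L$-value relation \eqref{eq:ESM_L_value}. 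The dimension bound is then immediate: \eqref{eq:IKM263_mod} supplies the two-term relation $\IKM(2,6;1)=72\IKM(2,6;3)$, which together with the four-term sum rule of Theorem~\ref{thm:Qlin_dim} and the Borwein--Salvy recurrence collapses $\Span_{\mathbb Q}\{\IKM(2,6;2k+1)\mid k\in\mathbb Z_{\geq0}\}$ onto two generators, giving $\dim_{\mathbb Q}\leq2$. The principal obstacles I foresee are the simultaneous handling of the doubly-logarithmic boundary concomitant and the consistent regularization of the divergent $\IKM(4,4;3)$ through $\IKMh(4,4;3)$, and the residual modular identity that pins the rational constant to exactly $72$--the step where the arithmetic of $f_{6,6}$, via \eqref{eq:ESM_L_value}, genuinely enters.
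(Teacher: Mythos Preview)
Your plan for the \emph{first} equalities in \eqref{eq:IKM263_mod}--\eqref{eq:IKM*443_mod} is essentially the paper's argument: the three pairings of $\IvKM(2,3;1|u)$ and $\IvKM(3,2;1|u)$, the commutator \eqref{eq:V_refl0}, the Bessel identity $(uD^2+D^1)I_0(\sqrt{u}t)=\tfrac{t^2}{4}I_0(\sqrt{u}t)$, the Hankel fusion, and the boundary bookkeeping via Lemma~\ref{lm:asympt0inf} are all as in the paper. The paper even regularizes the $(3,2)\times(3,2)$ case via \eqref{eq:IKM321''} exactly as you anticipate.

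The gap is in the \emph{second} equalities. Your proposed route---``integrating the $X_{6,3}$-rational factor against $f_{6,6}$ through its residue structure at the singular values $u=4,16$''---does not supply a mechanism: those poles lie on the positive $u$-axis, well away from the integration contour $u\in(-\infty,0)$, so no residue is picked up, and you have not said what contour deformation or analytic continuation you intend. The paper instead invokes the Chan--Zudilin base-change formulae \eqref{eq:CZ_phi}--\eqref{eq:CZ_chi}, which rewrite $\varphi_{6,6}$ and $\chi_{6,6}$ as Legendre-function densities in the Ramanujan cubic invariant $\alpha_3$. After this change of variable the modular integrals $\int_0^{i\infty}(\varphi_{6,6}+\chi_{6,6})z^j\,\D z$ become
\[
-\frac{1}{96}\int_0^{i\infty}f_{6,6}(z)z^j\,\D z\;+\;\text{(constants)}\times\int_{-1}^{1}x\,[P_{-1/3}(x)]^{4-j}[P_{-1/3}(-x)]^{j}\,\D x,
\]
and the Legendre integrals vanish by parity for $j=0,2$ and equal $-\tfrac{9\sqrt{3}}{4\pi}$ for $j=1$ (an input from \cite{Zhou2013Pnu}). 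That is where the factor $72$ actually comes from---the universal coefficient $-1/96$ together with \eqref{eq:IKM441_L} and \eqref{eq:IKM261_L}---not from \eqref{eq:ESM_L_value}, which is an independent consequence rather than an ingredient. Without the Chan--Zudilin identities or an equivalent replacement, your plan does not close.
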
\begin{proof}We recall  from Table \ref{tab:Vanhove_parity} that $ \widetilde L_3=D^1\check D^2+ \sqrt{u(u-8)}D^1\big[ \sqrt{u(u-8)} D^{0} \big] $. Plugging \begin{align} f(u)=g(u)=\int_0^\infty J_0(\sqrt{-u}t)I_0(t)[K_0(t)]^3t\D t=\IvKM(2,3;1|u),\quad u<0\end{align}into \eqref{eq:f_Dg}, we have vanishing boundary contributions during integrations by parts:\begin{align}\begin{split}
\langle f,\widetilde L_3[\ell(u)g]\rangle
\underset{\eqref{eq:D2L231}}{\xlongequal{\eqref{eq:IvKM231_asympt}}}{}&-\left\langle\check  Df,D^1\check D\left[\ell(u)g\right]\right\rangle-\left\langle \sqrt{u(u-8)}D^1\big[ \sqrt{u(u-8)} f \big] ,\ell(u)g\right\rangle\\\underset{\eqref{eq:D1_231}}{\xlongequal{\eqref{eq:D1L231}}}{}&\left\langle\check  D^{2}f,D^1 \left[\ell(u)g\right]\right\rangle-\left\langle \sqrt{u(u-8)}D^1\big[ \sqrt{u(u-8)} f \big] ,\ell(u)g\right\rangle\\\underset{\eqref{eq:D2_231}}{\xlongequal{\eqref{eq:IvKM231_asympt}}}{}&-\langle\widetilde L_3f,\ell(u)g\rangle,\end{split}{}&,
\end{align}so \eqref{eq:f_Dg} must be equal to\begin{align}
\big\langle f,\big[\widetilde L_3,\ell(u)D^0\big]g\big\rangle
=-\langle\widetilde L_3f,\ell(u)g\rangle-\langle\ell(u)f,\widetilde L_3g\rangle=0.\label{eq:V_refl1}\end{align}Thus, a Vanhove reflection leads us to the first equality in \eqref{eq:IKM263_mod}, thanks to the Bessel differential equation   $ (uD^{2}+D^{1})\IvKM(2,3;1|u)=\frac{1}{4}\IvKM(2,3;3|u)$.

Performing a Vanhove reflection on $ f(u)=\IvKM(3,2;1|u),g(u)=\IvKM(2,3;1|u)$ [resp.~$ f(u)=g(u)=\IvKM(3,2;1|u)$], and carefully handling boundary contributions from integrations by parts, we can verify the first equality in \eqref{eq:IKM353_mod} [resp.~\eqref{eq:IKM*443_mod}]. Concretely speaking,  when  $ f(u)=\IvKM(3,2;1|u)$, $g(u)=\IvKM(2,3;1|u)$,  we have\begin{align}\begin{split}
\langle f,\widetilde L_3[\ell(u)g]\rangle
\underset{\eqref{eq:D2L231}}{\xlongequal{\eqref{eq:IvKM231_asympt},\eqref{eq:IvKM321_asympt}}}{}&-\left\langle\check  Df,D^1\check D\left[\ell(u)g\right]\right\rangle-\left\langle \sqrt{u(u-8)}D^1\big[ \sqrt{u(u-8)} f \big] ,\ell(u)g\right\rangle\\\underset{\eqref{eq:D1_321}}{\xlongequal{\eqref{eq:D1L231}}}{}&\frac{\pi^{2}}{128}+\left\langle\check  D^{2}f,D^1 \left[\ell(u)g\right]\right\rangle-\left\langle \sqrt{u(u-8)}D^1\big[ \sqrt{u(u-8)} f \big] ,\ell(u)g\right\rangle\\\underset{\eqref{eq:D2_321}}{\xlongequal{\eqref{eq:IvKM231_asympt}}}{}&\frac{\pi^{2}}{128}-\langle\widetilde L_3f,\ell(u)g\rangle,\end{split}
\end{align}which entails [cf.~the first equality in \eqref{eq:IKM353_mod}]\begin{align}\begin{split}&
\frac{\pi^{2}}{128}=\big\langle f,\big[\widetilde L_3,\ell(u)D^0\big]g\big\rangle
\\={}&\frac{3\IKM(3,5;3)}{2}+\int_{-\infty}^0 f(u)g(u)\left[\frac{2}{(u-4)^2}+\frac{1}{3 (u-4)}+\frac{8}{(u-16)^2}+\frac{2}{3 (u-16)}\right]\D u;\end{split}
\end{align}when $ f(u)=g(u)=\IvKM(3,2;1|u)$, we consider $\langle f_{1},f_{2}\rangle_\varepsilon=\int_{-\infty}^{-\varepsilon}f_{1}(u)f_{2}(u)\D u $ in the $ \varepsilon\to0^+$ regime, so that we have \begin{align}\begin{split}
\langle f, D^1\check D^2[\ell(u)g]\rangle _{\varepsilon}\underset{\eqref{eq:D2L321}}{\xlongequal{\eqref{eq:IvKM321_asympt}}}{}&\frac{\log\frac{\varepsilon}{64}}{32}-\left\langle\check  Df,D^1\check D\left[\ell(u)g\right]\right\rangle_\varepsilon+O(\varepsilon\log\varepsilon)\\\underset{\eqref{eq:D1_321}}{\xlongequal{\eqref{eq:D1L321}}}{}&\frac{\log\frac{\varepsilon}{64}}{32}-\frac{\log\frac{\varepsilon^{3}}{2048}}{96}+\left\langle\check  D^{2}f,D^1 \left[\ell(u)g\right]\right\rangle_\varepsilon+O(\varepsilon\log^{2}\varepsilon)\\\underset{\eqref{eq:D2_321}}{\xlongequal{\eqref{eq:IvKM321_asympt}}}{}&-\frac{7\log 2}{96}-\langle D^1\check D^2 f, \ell(u)g\rangle _{\varepsilon}+O(\varepsilon\log^{2}\varepsilon)\end{split}
\end{align}as well as \begin{align}
\left \langle f, \sqrt{u(u-8)}D^1\big[ \sqrt{u(u-8)} D^{0} \big]\big [\ell(u)g\big]\right\rangle\xlongequal{\eqref{eq:IvKM321_asympt}}-\left\langle \sqrt{u(u-8)}D^1\big[ \sqrt{u(u-8)} f \big] ,\ell(u)g\right\rangle,
\end{align} which can be combined into $ -\frac{7\log 2}{96}=\big\langle f,\big[\widetilde L_3,\ell(u)D^0\big]g\big\rangle$. Here, to fully connect this  to~the first equality in \eqref{eq:IKM*443_mod}, we also require the following observation\begin{align}
(uD^{2}+D^{1})\IvKM(3,2;1|u)=\frac{1}{4}\int_0^\infty J_0(\sqrt{-u}t)\left\{ [I_0(t)K_0(t)]^2-\frac{1}{4t^{2}} \right\}t^{3}\D t.\label{eq:IKM321''}
\end{align}    To verify the identity above, we need a  variation on \cite[(3.32)]{Zhou2018LaportaSunrise}:\begin{subequations}\begin{align}
\lim_{T\to\infty}\int_{0^+-iT}^{0^++iT}H_0^{(1)}(i\sqrt{-u}z)[H_0^{(1)}(z)H_0^{(2)}(z)]^{2}z\D z={}&0,\\\lim_{T\to\infty}\int_{0^+-iT}^{0^++iT}H_0^{(1)}(i\sqrt{-u}z)\left\{[H_0^{(1)}(z)H_0^{(2)}(z)]^{2}-\frac{4}{\pi^{2}z^2}\right\}z^{3}\D z={}&0,
\end{align}\end{subequations}where the contours close to the right. Spelling out the Hankel functions using the Bessel functions, we can turn the last pair of vanishing integrals into {\allowdisplaybreaks\begin{subequations}\begin{align}\begin{split}&
\int_0^\infty J_{0}(\sqrt{-u}t)[\pi I_{0}(t)K_0(t)]^{2}t\D t\\={}&\int_0^\infty J_{0}(\sqrt{-u}t)[K_0(t)]^{4}t\D t-2\pi\int_0^\infty Y_{0}(\sqrt{-u}t)I_{0}(t)[K_0(t)]^{3}t\D t,\label{eq:YIKKK_HT}\end{split}\\\begin{split}&
\int_0^\infty J_{0}(\sqrt{-u}t)\left\{[\pi I_{0}(t)K_0(t)]^{2}-\frac{\pi^{2}}{4t^{2}}\right\}t^{3}\D t\\={}&\int_0^\infty J_{0}(\sqrt{-u}t)[K_0(t)]^{4}t^{3}\D t-2\pi\int_0^\infty Y_{0}(\sqrt{-u}t)I_{0}(t)[K_0(t)]^{3}t^{3}\D t,\label{eq:YIKKK_HT''}\end{split}
\end{align}\end{subequations}}for $u<0$. Hitting the Bessel differential operator on the right hand side of \eqref{eq:YIKKK_HT} [which is equivalent to a combination of \eqref{eq:J_zZ63} and \eqref{eq:JKKKK_YIKKK}], we can deduce \eqref{eq:IKM321''} from \eqref{eq:YIKKK_HT''}.

Before verifying the second halves of  \eqref{eq:IKM263_mod}--\eqref{eq:IKM*443_mod}, we   transcribe the Chan--Zudilin base-change formulae \cite[(3.3)--(3.5)]{ChanZudilin2010} into  the following identities\begin{align}
2^{4}3^4\varphi_{6,6}(z)={}&\frac{[P_{-1/3}(1-2\alpha_3(z))]^4[1-2\alpha_3(z)]}{\pi i}\frac{\partial\alpha_3(z)}{\partial z},\label{eq:CZ_phi}\\\frac{3^{3}}{2}f_{6,6}(z)+2^{4}3^4\chi_{6,6}(z)={}&-\frac{[P_{-1/3}(1-2\alpha_3(2z))]^4[1-2\alpha_3(2z)]}{\pi i}\frac{\partial\alpha_3(2z)}{\partial z},\label{eq:CZ_chi}
\end{align}  for $ z/i>0$. Here, the Ramanujan cubic invariant (see \cite[Chap.~33, \S\S2--8]{RN5} and \cite[Chap.~9]{RLN2})\begin{align}
\alpha_{3}(z):=\left( \frac{[\eta(z/3)]^3}{3[\eta(3z)]^3}+1 \right)^{-3}=\left( \frac{[\eta(z)]^{12}}{27[\eta(3z)]^{12}}+1 \right)^{-1}\label{eq:alpha3_defn}
\end{align}maps the positive $ \I z$-axis bijectively to the open unit interval $ (0,1)$; the Legendre function of degree $ -1/3$  \cite[p.~22, (1.6.28)]{ET1} \begin{align}
P_{-1/3}(\cos\theta)=\frac{2}{\pi}\int_0^\theta \frac{\cos\frac{\beta}{3}}{\sqrt{\smash[b]{2(\cos\beta-\cos\theta)}}}\D\beta,\quad  \theta\in(0,\pi)
\end{align} satisfies \cite[Chap.~33, (5.24)]{RN5}\begin{align}
z=\frac{i P_{-1/3}(2\alpha_3(z)-1)}{\sqrt{3}P_{-1/3}(1-2\alpha_3(z))},\quad z/i>0.
\end{align}In view of the relations above, we arrive at the second equality in \eqref{eq:IKM263_mod}, after canceling out the contributions from the right-hand sides of \eqref{eq:CZ_phi} and \eqref{eq:CZ_chi}  as $ \int_{-1}^1x[P_{-1/3}(x)]^4\D x-\int_{-1}^1x[P_{-1/3}(x)]^4\D x=0$, and referring back to \eqref{eq:IKM261_L}. Similarly, the second equality in \eqref{eq:IKM*443_mod} follows from a trivial identity $ \int_{-1}^1x[P_{-1/3}(x)]^2[P_{-1/3}(-x)]^2\D x=0$ along with \eqref{eq:IKM441_L}. To deduce the second equality in  \eqref{eq:IKM353_mod}, we need both  \eqref{eq:IKM351_L} and a closed-form evaluation $ \int_{-1}^1 x[P_{-1/3}(x)]^3P_{-1/3}(-x)\D x=-\frac{9 \sqrt{3}}{4 \pi }$  \cite[(64)]{Zhou2013Pnu}.   \end{proof}

\begin{corollary}[Consequences of exceptional sum rules]The identities \eqref{eq:IKM171IKM173} and  \eqref{eq:nonlin_sum_8Bessel} are true. \end{corollary}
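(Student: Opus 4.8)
The plan is to push the Hankel-fusion and Vanhove-reflection machinery of Theorem~\ref{thm:ab_sum} one moment-order (and, for the quadratic relation, one bilinear degree) further, and then feed in the relations already in hand: the Broadhurst representations \eqref{eq:IKM351_L}, \eqref{eq:IKM171_L}, \eqref{eq:IKM261_L}, the $L$-value identity \eqref{eq:ESM_L_value}, and the order-$3$ sum rule \eqref{eq:IKM351IKM353}. Throughout, I would work with the off-shell functions $P(u):=\IvKM(2,3;1|u)=\frac{\pi^2}{16}Z_{6,3}(z)$ and $W(u):=\IvKM(1,4;1|u)$ [the $J_0$-transforms of $I_0(t)[K_0(t)]^3$ and of $[K_0(t)]^4$, via \eqref{eq:JIKKK_Z63}], together with the moment-raising operator $(uD^2+D^1)$, which inserts a factor $t^2/4$ and satisfies $\IvKM(2,3;3|u)=4(uD^2+D^1)P$. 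Under the substitution $u=-64X_{6,3}(z)$ one has $x\,\D x\propto f_{6,6}(z)\,Z_{6,3}(z)^{-2}\,\D z$ (with $x=\sqrt{-u}$), so every Hankel fusion $\int_0^\infty(\cdot)(\cdot)x\,\D x$ becomes a period integral $\int_0^{i\infty}(\text{polynomial in }z)\,f_{6,6}(z)\,\D z$.

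For \eqref{eq:IKM171IKM173}, I would start from the sum rule $\IKM(1,7;1)=\pi^2\IKM(3,5;1)$ obtained by comparing \eqref{eq:IKM171_L} with \eqref{eq:IKM351_L}, and establish its moment-order-$3$ analogue $\IKM(1,7;3)=\pi^2\IKM(3,5;3)-\frac{\pi^4}{128}$. This I would get by applying $(uD^2+D^1)$ to the mixed $J_0$–$Y_0$ identity \eqref{eq:JKKKK_YIKKK} and pairing the result against \eqref{eq:JIKKK_Z63} through a Hilbert-type fusion, exactly as in the proof of the Broadhurst-representation theorem (where \eqref{eq:IKM171_L} itself emerged from such a cancellation); the additive constant $-\frac{\pi^4}{128}$ is a boundary contribution of precisely the kind catalogued in Lemma~\ref{lm:asympt0inf} and extracted in the Vanhove reflections of Theorem~\ref{thm:ab_sum}. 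Subtracting $72$ times this relation and invoking \eqref{eq:IKM351IKM353} then gives $\IKM(1,7;1)-72\IKM(1,7;3)=\pi^2\!\left(-\frac{5\pi^2}{12}\right)+72\cdot\frac{\pi^4}{128}=\frac{7\pi^4}{48}$, as claimed.

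For the quadratic relation \eqref{eq:nonlin_sum_8Bessel}, I would write all four moments as Parseval pairings of Hankel transforms: with $P,W$ as above and their moment-raised versions $P_3=\IvKM(2,3;3|u)$, $W_3=\IvKM(1,4;3|u)$, one has $\IKM(2,6;1)=\int_0^\infty P^2 x\,\D x$, $\IKM(2,6;5)=\int_0^\infty P_3^2 x\,\D x$, $\IKM(1,7;1)=\int_0^\infty PW x\,\D x$, and $\IKM(1,7;5)=\int_0^\infty P_3 W_3 x\,\D x$. Substituting the modular parametrizations and converting to period integrals of $f_{6,6}$, the antisymmetric combination $\IKM(1,7;1)\IKM(2,6;5)-\IKM(1,7;5)\IKM(2,6;1)$ is arranged so that its $L$-value content cancels: what survives is a period determinant governed by the Wronskian (Legendre relation) of the two periods $\frac{\pi z}{4i}Z_{6,3}$ and $\frac{\pi(z^2+\frac16)}{4}Z_{6,3}$ of the weight-$2$ form $Z_{6,3}$ in \eqref{eq:J_zZ63}–\eqref{eq:Y_zzZ63}, i.e.\ by the constant Wronskian of the weight-$1$ periods underlying $Z_{6,3}$. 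Combining this with \eqref{eq:ESM_L_value} and the order-$1$ values \eqref{eq:IKM261_L}, \eqref{eq:IKM171_L} isolates the pure power of $\pi$ on the right-hand side of \eqref{eq:nonlin_sum_8Bessel}, namely $\frac{45\pi^6}{16}$.

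The main obstacle is the order-$5$ bilinear step. The passages $P\mapsto P_3$ and $W\mapsto W_3$ differentiate modular objects and therefore leave the space of modular forms, producing quasimodular / second-period pieces; the crux is to verify that in the antisymmetric determinant these non-period contributions cancel, leaving only the Legendre–Wronskian constant (so that the right-hand side is a pure power of $\pi$, consistent with $L$-values cancelling). Equivalently, I must control the boundary behaviour at $u\to0^-$ and $u\to-\infty$ for the higher-moment integrands — the same regime analysed in Lemma~\ref{lm:asympt0inf} — and confirm that the regularized fusions converge. Once this cancellation is secured, substituting \eqref{eq:ESM_L_value} is routine; the same boundary bookkeeping simultaneously pins down the constant $-\frac{\pi^4}{128}$ needed for \eqref{eq:IKM171IKM173}.
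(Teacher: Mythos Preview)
Your proposal takes a genuinely different---and much longer---route than the paper's proof, which is essentially two lines of algebra because it imports two results already proved elsewhere:
\begin{itemize}
\item[(i)] the Crandall relations $\pi^{2}\IKM(3,5;2j+1)-\IKM(1,7;2j+1)=c_{j}\pi^{4}$ for $j=0,1,2$ with $(c_{0},c_{1},c_{2})=(0,\,2^{-7},\,2^{-8})$, established in \cite{HB1,Zhou2018ExpoDESY};
\item[(ii)] the Broadhurst--Mellit $3\times3$ determinant $\det\big(\IKM(a,8-a;2k+1)\big)_{1\le a\le3,\,0\le k\le2}=5\pi^{8}/(2^{19}\cdot3)$, established in \cite{Zhou2017BMdet}.
\end{itemize}
For \eqref{eq:IKM171IKM173}, the paper simply combines the $j=0,1$ Crandall relations with \eqref{eq:IKM353_mod}. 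Your plan to re-derive the $j=1$ Crandall relation by a Hilbert-fusion/boundary computation is not wrong---the constant $-\pi^{4}/128$ you predict is exactly $-c_{1}\pi^{4}$---but it duplicates work already done in the cited papers, and the arithmetic you display afterward is then identical to the paper's.

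For \eqref{eq:nonlin_sum_8Bessel}, the paper's argument is pure linear algebra: subtract $\pi^{-2}$ times the first row of the $3\times3$ matrix from the third (using all three Crandall relations), then subtract $\tfrac{1}{72}$ times the first column from the second (using \eqref{eq:IKM261IKM263} and \eqref{eq:IKM171IKM173}), and expand along the new second column $(-7\pi^{4}/3456,\,0,\,\pi^{2}/2^{7})^{\top}$. Your proposal instead attempts a from-scratch bilinear period computation and explicitly flags the order-$5$ step as an unresolved obstacle (the cancellation of quasimodular/second-period pieces in the antisymmetric combination). Even if that gap could be closed, you would in effect be reconstructing the Wro\'nskian determinant theory of \cite{Zhou2017BMdet}; the paper sidesteps this entirely by citing the determinant as input. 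So the paper's approach buys brevity and rigor at the cost of black-boxing two earlier theorems, while your approach aims for self-containment but leaves the hardest step (the order-$5$ bilinear cancellation) as a genuine gap.
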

\begin{proof}

We recall the following relations for Crandall numbers \begin{align}\left\{\begin{array}{r@{\,=\,}l}
\pi^{2}\IKM (3,5;1)-\IKM (1,7;1)&0,\\\pi^{2}\IKM (3,5;3)-\IKM (1,7;3)&\frac{\pi^4}{2^7},\\\pi^{2}\IKM (3,5;5)-\IKM (1,7;5)&\frac{\pi^4}{2^8},\end{array}\right.\label{eq:Crandall_rel}
\end{align}which had arisen from numerical experiments of Broadhurst--Mellit (see  \cite[(7.10)]{BroadhurstMellit2016} or  \cite[(149) in Conjecture 5]{Broadhurst2016}) before being verified algebraically (see \cite[\S3.2]{HB1} or \cite[\S3]{Zhou2018ExpoDESY}). The first two equations of these allow us to deduce  \eqref{eq:IKM171IKM173}  from   \eqref{eq:IKM353_mod}.

The following determinant \begin{align}
\det\begin{pmatrix}\IKM (1,7;1)& \IKM (1,7;3) & \IKM (1,7;5) \\
\IKM (2,6;1)& \IKM (2,6;3) & \IKM (2,6;5) \\
\IKM (3,5;1)& \IKM (3,5;3) & \IKM (3,5;5) \\
\end{pmatrix}=\frac{5\pi^8}{2^{19}3}
\end{align}had been discovered numerically (see \cite[(7.11)]{BroadhurstMellit2016} or \cite[(163)]{Broadhurst2016}) before a mathematical proof was found \cite[\S4]{Zhou2017BMdet}.  According to \eqref{eq:Crandall_rel}, the determinant above must be equal to \begin{align}
\det\begin{pmatrix}\IKM (1,7;1)& \IKM (1,7;3) & \IKM (1,7;5) \\
\IKM (2,6;1)& \IKM (2,6;3) & \IKM (2,6;5) \\
0& \frac{\pi^{2}}{2^{7}} & \frac{\pi^{2}}{2^{8}} \\
\end{pmatrix}.
\end{align}    Then, we subtract $ \frac1{72}$ times the first column from the second column, while referring to \eqref{eq:IKM261IKM263}--\eqref{eq:IKM171IKM173}, so as to equate the last determinant with \begin{align}
\det\begin{pmatrix}\IKM (1,7;1)& -\frac{7 \pi ^4}{3456} & \IKM (1,7;5) \\
\IKM (2,6;1)& 0 & \IKM (2,6;5) \\
0& \frac{\pi^{2}}{2^{7}} & \frac{\pi^{2}}{2^{8}} \\
\end{pmatrix}.
\end{align}Therefore, the non-linear sum rule \eqref{eq:nonlin_sum_8Bessel} is true.\end{proof}

At present, we are not aware of any applications of the ``honorary Bessel moment'' $ \IKMh(4,4;3)$ to quantum field theory, but this quantity does play a r\^ole in the asymptotic analysis of a probability density function, as we describe below.

\begin{corollary}[Asymptotic behavior of Kluyver's $ p_7(x)$]Let $ p_n(x)=\int_0^\infty J_0(xt)[J_0(t)]^n xt\D t$ be Kluyver's probability density for the distance $x$ traveled by a rambler walking in the Euclidean plane, who takes $n$  steps of unit lengths, while aiming at uniformly distributed  directions. We have \begin{align}
\lim_{x\to1^-}\left[\frac{p_3(x)}{12\pi ^{2}x}+\frac{\D ^2}{\D x^2}\frac{p_{7}(x)}{35x}\right]=\frac{19L(f_{6,6},1)}{648\pi^{2}}+\frac{7\log 2}{72\pi^4}.\label{eq:p7_lim}
\end{align}In other words, the following asymptotic expansion applies to $ x\to1^-$:\begin{align}\begin{split}
\frac{p_7(x)}{35x}={}&\frac{L(f_{6,6},1)}{9\pi^{2}}\left[ 1-\frac{x-1}{4} +\frac{19(x-1)^{2}}{144}\right]-\frac{3 (x-1)^2}{32 \pi ^4}\\&+\frac{(x-1)^2}{16 \pi ^4}\log \frac{1-x}{2^{11/9}}+o((x-1)^2\log(1-x)).\end{split}\label{eq:p7_asympt}
\end{align}  \end{corollary}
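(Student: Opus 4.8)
The plan is to derive the local asymptotic expansion \eqref{eq:p7_asympt} directly, and then read off \eqref{eq:p7_lim} by differentiating twice and cancelling the logarithmic singularity against the $p_3$-term. Writing $P(x):=\frac{p_7(x)}{35x}=\frac1{35}\int_0^\infty J_0(xt)[J_0(t)]^7t\D t$, I would first locate the source of the non-analytic behaviour at $x=1$: since $J_0(t)\sim\sqrt{2/(\pi t)}\cos(t-\pi/4)$ for large $t$, the factor $[J_0(t)]^7$ carries a frequency-one harmonic, and it is precisely the resonance of this harmonic with the probe $J_0(xt)$ that produces the $x\to1^-$ singularity. To convert this resonance into Bessel moments, I would Wick-rotate as in the proof of Theorem~\ref{thm:ab_sum}, splitting $J_0=\frac12(H_0^{(1)}+H_0^{(2)})$ and using $H_0^{(1)}(iy)=\frac{2}{\pi i}K_0(y)$ together with the continuation $J_0(t)=I_0(it)$. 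The probe and the seven walk factors then fuse into eight-Bessel integrands $[I_0(t)]^a[K_0(t)]^{8-a}$, so that the coefficients in the expansion of $P(x)$ near $x=1$ become the on-shell moments $\IKM(2,6;2k+1)$ and the regularized ``honorary'' moment $\IKMh(4,4;3)$.

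The key structural point is that the $\frac{1}{4t^2}$ subtraction in the definition of $\IKMh(4,4;3)$ mirrors exactly the counterterm supplied by $p_3$. Indeed, applying $\frac{\D^2}{\D x^2}$ to $P(x)$ brings down $-t^2$ on the probe, after invoking the Bessel equation $J_0''(xt)+\frac1{xt}J_0'(xt)+J_0(xt)=0$, and thereby replaces the integrand by the more singular weight $[J_0(t)]^7t^3$. The subleading large-$t$ tail of this weight produces a logarithmically divergent contribution at $x=1$, whose regularization forces the $\frac{1}{4t^2}$ subtraction and hence the honorary moment $\IKMh(4,4;3)$, along with a $\log 2$ from the scale of the cut-off. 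The term $\frac{p_3(x)}{12\pi^2x}=\frac1{12\pi^2}\int_0^\infty J_0(xt)[J_0(t)]^3t\D t$ is precisely the counterterm that removes this divergence: near $x=1$ one has $p_3(x)\sim-\frac{3}{2\pi^2}\log\frac{1-x}{4}$, a classical evaluation recorded in the random-walk literature, so that $\frac{p_3(x)}{12\pi^2x}$ cancels the $\log(1-x)$ produced by $P''(x)$ and supplies a finite $\log 2$ residue.

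With the expansion in hand, the remaining work is purely algebraic. I would substitute the exceptional sum rules of Theorems~\ref{thm:8Bessel_surprise} and~\ref{thm:ab_sum}, namely $\IKM(2,6;1)=72\IKM(2,6;3)$ and $\IKM(4,4;1)-72\IKMh(4,4;3)=\frac{7\log 2}{2}$, together with the Broadhurst representations $\IKM(2,6;1)=\frac{\pi^4}{8}L(f_{6,6},1)$ and $\IKM(4,4;1)=L(f_{6,6},3)$ and the $L$-value relation $7\pi^2L(f_{6,6},1)=36L(f_{6,6},3)$. These collapse every coefficient into a rational multiple of $L(f_{6,6},1)/\pi^2$ plus a rational multiple of $\log2/\pi^4$, producing the polynomial-plus-logarithm shape of \eqref{eq:p7_asympt}. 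Differentiating twice and passing to the limit then yields the cancellation of $\frac{3}{16\pi^4}$ against the derivative of the $(x-1)^2\log\frac{1-x}{2^{11/9}}$ term, and reproduces the constant $\frac{19L(f_{6,6},1)}{648\pi^2}+\frac{7\log 2}{72\pi^4}$ in \eqref{eq:p7_lim}.

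The main obstacle is the first step: rigorously establishing the expansion of $P(x)$ through order $(x-1)^2\log(1-x)$ with controlled error $o((x-1)^2\log(1-x))$, and in particular proving that the divergent weight $[J_0(t)]^7t^3$ regularizes to exactly $\IKMh(4,4;3)$ with the stated $\log 2$ scale $2^{11/9}$. This demands a careful matched expansion separating the oscillatory large-$t$ tail, which governs both the singularity and the cut-off constant, from the convergent Bessel-moment remainder, much as in the contour-rotation arguments of \cite{Zhou2017PlanarWalks} and of the present Theorem~\ref{thm:ab_sum}; the bookkeeping of the combinatorial factor $35=\binom{7}{3}$ and of the harmonic selection inside $[J_0(t)]^7$ is where the computation is most delicate.
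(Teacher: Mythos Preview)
Your proposal assembles the right ingredients --- Wick rotation into $I_0,K_0$ moments, the exceptional sum rules \eqref{eq:IKM441IKM*443} and \eqref{eq:IKM261IKM263}, the $L$-value relations \eqref{eq:IKM441_L}, \eqref{eq:IKM261_L}, \eqref{eq:ESM_L_value}, and the classical $p_3(x)\sim\frac{3}{2\pi^2}\log\frac{4}{1-x}$ --- and the algebraic collapse you describe in the third paragraph is exactly what happens. But the order of attack and the analytic machinery differ from the paper in a way that matters.

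The paper does \emph{not} attempt a matched asymptotic expansion of the oscillatory integral $\int_0^\infty J_0(xt)[J_0(t)]^7t\,\D t$. Instead it invokes, as a black box from \cite[Theorem 4.2]{Zhou2017PlanarWalks}, the exact representation
\[
\frac{p_7(x)}{35x}=\frac{4}{\pi^6}\int_0^\infty I_0(xt)I_0(t)[K_0(t)]^6t\,\D t-\frac{2}{\pi^4}\int_0^\infty I_0(xt)[I_0(t)]^3[K_0(t)]^4t\,\D t,
\]
valid on all of $[0,1]$. With this in hand there is no resonance to isolate and no tail to regularize: one simply applies the Bessel operator $\frac{\D^2}{\D x^2}+\frac1x\frac{\D}{\D x}$ under the integral sign (bringing in $t^3$), adds the companion representation $p_3(x)=\frac{6}{\pi^2}\int_0^\infty I_0(xt)I_0(t)[K_0(t)]^2xt\,\D t$ as counterterm, and takes the limit $x\to1^-$ termwise. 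The first-derivative information comes from the elementary integration by parts $\frac{\D}{\D x}\big|_{x=1}\frac{p_7(x)}{x}=-\int_0^\infty J_1(t)[J_0(t)]^7t^2\,\D t=-\frac{p_7(1)}{4}$. Thus the paper establishes \eqref{eq:p7_lim} \emph{first}, and only then reads off \eqref{eq:p7_asympt} by inserting the known asymptotic of $p_3$.

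What this buys: the ``main obstacle'' you flag --- the rigorous control of the $(x-1)^2\log(1-x)$ error and the emergence of the scale $2^{11/9}$ --- dissolves entirely, because the $I_0K_0$ representation is real-analytic on $[0,1)$ and the only singular input is the explicit $p_3$ asymptotic. Your route would work, but it amounts to re-deriving the cited representation from scratch via the Wick rotation of \cite{Zhou2017PlanarWalks}; the paper simply quotes it.
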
\begin{proof}From \cite[Theorem 4.2]{Zhou2017PlanarWalks}, we know that \begin{align}
\frac{p_{7}(x)}{35}=\frac{4}{\pi^6}\int_{0}^\infty I_{0} (xt)I_0(t)[K_0(t)]^6xt\D t-\frac{2}{\pi^4}\int_{0}^\infty I_{0} (xt)[I_0(t)]^3[K_0(t)]^4xt\D  t\label{eq:p7_IKM}
\end{align}holds for $ x\in[0,1]$. (This corrects \cite[(4.6)]{Zhou2017PlanarWalks}, where a   factor of $x$ went missing on the right-hand side.) Thus, it follows that (see \cite[Theorem 4.1, (4.3)]{Zhou2017PlanarWalks}; see also  \eqref{eq:IKM441_L}, \eqref{eq:IKM261_L} and \eqref{eq:ESM_L_value}  above)\begin{align}
\frac{p_{7}(1)}{35}=\frac{4}{\pi^6}\IKM(2,6;1)-\frac{2}{\pi^4}\IKM(4,4;1)=\frac{L(f_{6,6},1)}{9\pi^{2}}.
\end{align}Differentiating $ p_7(x)/x=\int_0^\infty J_0(xt)[J_0(t)]^7 t\D t$ under the integral sign and integrating by parts, we have \begin{align}
\left.\frac{\D }{\D x}\right|_{x=1}\frac{p_7(x)}{x}=-\int_0^\infty J_{1}(t)[J_0(t)]^7 t^{2}\D t=-\frac{p_{7}(1)}{4},\label{eq:p7'(1)}
\end{align}where $ J_1(x)=-\D J_0(x)/\D x$.

Meanwhile, using the Bessel differential equation, we can turn \eqref{eq:p7_IKM} into\begin{align}
\left( \frac{\D ^2}{\D x^2} +\frac{1}{x}\frac{\D }{\D x}\right)\frac{p_{7}(x)}{35x}=\frac{4}{\pi^6}\int_{0}^\infty I_{0} (xt)I_0(t)[K_0(t)]^6t^{3}\D t-\frac{2}{\pi^4}\int_{0}^\infty I_{0} (xt)[I_0(t)]^3[K_0(t)]^4t^{3}\D  t,
\end{align}for $ 0< x<1$. As we juxtapose the last equation with the following identity  (see \cite[Lemma 4.1.1]{Zhou2017WEF} or \cite[(3.3)]{Zhou2017PlanarWalks})\begin{align}
p_{3}(x)=\frac{6}{\pi^{2}}\int_0^\infty I_{0}(xt)I_{0}(t)[K_{0}(t)]^{2}xt\D t,\quad 0\leq x<1,
\end{align} we obtain \begin{align}
\lim_{x\to1^-}\left[\frac{p_3(x)}{12\pi ^{2}x}+\left( \frac{\D ^2}{\D x^2} +\frac{1}{x}\frac{\D }{\D x}\right)\frac{p_{7}(x)}{35x}\right]=\frac{4}{\pi^6}\IKM(2,6;3)-\frac{2}{\pi^4}\IKMh(4,4;3).
\end{align}Thanks to \eqref{eq:p7'(1)}, \eqref{eq:IKM441IKM*443} and \eqref{eq:IKM261IKM263}, this further reduces into\begin{align}
\lim_{x\to1^-}\left[\frac{p_3(x)}{12\pi ^{2}x}+\frac{\D ^2}{\D x^2}\frac{p_{7}(x)}{35x}\right]-\frac{p_{7}(1)}{140}=\frac{p_{7}(1)}{2520}+\frac{7\log 2}{72\pi^4},
\end{align} hence our claim in \eqref{eq:p7_lim}.

On the other hand, through asymptotic analysis of the explicit formula for $ p_3(x)$ \cite[(3.4)]{BSWZ2012}, we see that \begin{align}
\frac{p_3(x)}{12\pi ^{2}x}=\frac{1}{8\pi^{4}}\log\frac{4}{1-x}+O((x-1)\log(1-x)),\quad x\to1^-.
\end{align}Therefore, we arrive at \begin{align}
\lim_{x\to1^-}\left[-\frac{\log(1-x)}{8\pi^{4}}+\frac{\D ^2}{\D x^2}\frac{p_{7}(x)}{35x}\right]=\frac{19L(f_{6,6},1)}{648\pi^{2}}-\frac{11\log 2}{72\pi^4},
\end{align}which is compatible with \eqref{eq:p7_asympt}. \end{proof}\begin{remark}Comparing the right-hand side of \eqref{eq:p7_IKM} with \cite[Lemma 3.4(b)]{Zhou2020BRquad}, and setting \begin{align}\begin{split}
g(x,t):={}& I_0( xt)[K_0(t)]{}^5  \left\{[K_0(t)]{}^2-3 [\pi I_0(t)]^2\right\}\\{}&+K_0( xt)I_0(t) [K_0(t)]^4 \left\{7 [K_0(t)]{}^2-5 [\pi I_0(t)]^2\right\},\end{split}
\end{align}we see that\begin{align}
\frac{p_{7}(x)}{35}+\frac{2\log(1-x)}{\pi^{8}}\int_0^\infty g(x,t)t\D t
\end{align}equals a Taylor series expansion in powers of $ (x-1)^{\mathbb Z_{\geq0}}$, as $ x\to1^-$. Meanwhile, by \cite[Lemma 3.4(b)]{Zhou2020BRquad}, we have another Taylor expansion in the neighborhood of $x=1$, with leading order behavior \begin{align}
\int_0^\infty g(x,t)t\D t=-\frac{\pi^{4}}{32}(x-1)^2+O((x-1)^3).
\end{align}This explains the qualitative structure of the  right-hand side in \eqref{eq:p7_asympt}. \eor\end{remark}


\end{document}